\tikzset{>=stealth}
\newtheorem{intro-thm}{Theorem}[]
\theoremstyle{plain}
\newtheorem{thm}{Theorem}[section]
\newtheorem{theorem}[thm]{Theorem}
\newtheorem{lemma}[thm]{Lemma}
\newtheorem{corollary}[thm]{Corollary}
\newtheorem{proposition}[thm]{Proposition}
\theoremstyle{definition}
\newtheorem{remark}[thm]{Remark}
\newtheorem{definition}[thm]{Definition}
\newtheorem{example}[thm]{Example}
\newcommand{\Spec}{{\rm Spec \,}}
\renewcommand{\tilde}{\widetilde}
\title[$\mathbb{A}^1$-homotopy type of $\mathbb{A}^2 \setminus \left\{(0,0) \right\}$]{$\mathbb{A}^1$-homotopy type of $\mathbb{A}^2 \setminus \left\{(0,0) \right\}$}
\author{Utsav Choudhury}
\address{Indian Statistical Institute\\ Stat Math Unit\\203 B.T. Road \\ Kolkata - 700108\\ India}
\email{utsav@isical.ac.in}
\author{Biman Roy}
\address{Indian Statistical Institute\\ Stat Math Unit\\203 B.T. Road \\ Kolkata - 700108\\ India}
\email{bimanroy31@gmail.com}
\keywords{$\mathbb{A}^1$-homotopy theory, Affine Algebraic Geometry, Zariski Cancellation}
\subjclass[2010]{Primary 14F42}
\begin{document}
\maketitle
\begin{abstract}
In this article we prove that any  $\mathbb{A}^1$-connected smooth $k$-variety is $\mathbb{A}^1$-uniruled for any algebraically closed field $k$. We establish that if a non empty open subscheme $X$ of a smooth affine $k$-scheme is $\mathbb{A}^1$-weakly equivalent to $\mathbb{A}^2_{k} \setminus \left\{(0,0) \right\}$,  then $X \cong \mathbb{A}^2_{k} \setminus \left\{(0,0) \right\}$ as $k$-varieties for any field $k$ of characteristic $0$.
\end{abstract}
\tableofcontents
\section{Introduction}
Let $k$ be a field and $Sm/k$ be the category of smooth separated finite type $k$-schemes. A scheme $X \in Sm/k$ is said to be \textbf{$\mathbb{A}^1$-uniruled or log-uniruled} if there is a dominant generically finite morphism $H: \mathbb{A}^1_k \times_k Y \to X$ for some $k$-variety $Y$. 

  The unstable motivic homotopy category constructed by Morel-Voevodsky \cite{mv}, denoted by $\mathbf{H}(k)$, is obtained by attaching simplicial homotopy to $Sm/k$ and then inverting the Nisnevich local equivalences and the projection maps $pr : X \times \mathbb{A}^1 \to X$ for any $X \in Sm/k$. Objects in $\mathbf{H}(k)$ are the simplicial presheaves on $Sm/k$ and any such object $\mathcal{X}$ will be called a space. For example, any $X \in Sm/k$ gives an object in $\mathbf{H}(k)$. Recall that a morphism of spaces $f : \mathcal{X} \to \mathcal{Y}$ is called an $\mathbb{A}^1$-weak equivalence if the class of $f$ in $\mathbf{H}(k)$ is an isomorphism. There is a pointed version of this construction, denoted by $\mathbf{H}_{\bullet}(k)$, is called the pointed unstable $\mathbb{A}^1$-homotopy category. This model category is constructed from the category of pointed spaces and the morphism of pointed spaces and inverting $\mathbb{A}^1$-weak equivalences of pointed spaces. A morphism of pointed spaces $f : (\mathcal{X}, x) \to (\mathcal{Y}, y)$ is an $\mathbb{A}^1$-weak equivalence if the corresponding morphism $f : \mathcal{X} \to \mathcal{Y}$, after forgetting the base point, is an isomorphism in $\mathbf{H}(k)$.  For any space $\mathcal{X}$, define $\pi_0^{\mathbb{A}^1}(\mathcal{X})$ to be the Nisnevich sheaf associated to the presheaf 
$$U \in Sm/k \mapsto Hom_{\mathbf{H}(k)}(U, \mathcal{X}).$$
This is called the $\mathbb{A}^1$-connected component sheaf of the space $\mathcal{X}$.  Classification of $\mathbb{A}^2_k$ using $\mathbb{A}^1$-connected component sheaves was described in \cite[Theorem 4.9, Theorem 5.1]{cb}. Note that in the proof of \cite[Theorem 4.9]{cb} we only need that $\pi_0^{\mathbb{A}^1}(X)(Spec \ k)$ is trivial, for a smooth surface $X \in Sm/k$. Our first main result of this article is Theorem \ref{A1-connected dominant}, where we show that any $\mathbb{A}^1$-connected variety $X$ is $\mathbb{A}^1$-uniruled. This can be seen as a generalisation of \cite[Corollary 2.4]{bhs1} to the non proper case.
\

Next we try to analyse the case of $\pi_0^{\mathbb{A}^1}(X)(Spec \ k) = \bullet$, for any smooth affine surface $X \in Sm/k$, where $k$ is an algebraically closed field. Proposition \ref{connected implies} shows that  in this case either $X$ is $\mathbb{A}^1$-uniruled or through any point of $X$ there is an $\mathbb{A}^1$ and moreover there exists a point through which two distinct $\mathbb{A}^1$ pass. In the proof of Theorem \ref{A1-connected dominant} and Proposition \ref{connected implies} we exploit the beautiful idea of $\mathbb{A}^1$-ghost homotopies developed in \cite{bhs}.  For other applications of $\mathbb{A}^1$-ghost homotopy, see \cite{bs}, \cite{brs}, \cite{bhs2}.

\

 In \cite[Theorem 1.1]{cb} we showed that for any smooth affine surface $X$ over $k$, where $k$ is a field of characteristic $0$, if the structure map $f : X \to Spec \ k$ is an $\mathbb{A}^1$-weak equivalence, then $X \cong \mathbb{A}^2_k$ as $k$-varieties. There are two circles $S^1_s, S^1_t$, the simplicial and the Tate circles respectively, in $\mathbf{H}(k)$ (\cite[Section 3.2]{mv}). These give us the following spheres for $i \geq 0$, $$S^i_s := (S^1_s)^{\wedge i}, \ S^i_t := (S^1_t)^{\wedge i}$$ and for $i \geq j \geq 0$ we have the following mixed spheres $$S^{i,j} := S_s^{i-j} \wedge S^j_t.$$  The quasi-affine varieties $\mathbb{A}^n_k \setminus \left\{(0,\dots, 0) \right\}$ are also mixed spheres in $\mathbf{H}_{\bullet}(k)$, infact $\mathbb{A}^n_k \setminus \left\{(0,\dots, 0) \right\} \cong S^{2n-1, n}$ in $\mathbf{H}_{\bullet}(k)$ (\cite[Section 3.2, Example 2.20]{mv}). Therefore, it is natural to ask whether a pointed open subscheme $X$ of a smooth affine $k$-variety of dimension $n$, $\mathbb{A}^1$-weakly equivalent to $S^{2n-1, n}$ in $\mathbf{H}_{\bullet}(k)$ is isomorphic as $k$-variety to $\mathbb{A}^n_k \setminus \left\{(0,\dots, 0) \right\}$. We prove that for $n =2$, $\mathbb{A}^2_{k} \setminus \left\{(0, 0) \right\}$ is the only pointed open subvariety of a smooth affine variety (upto isomorphism) of dimension $2$ over a field of characteristic zero, which is $\mathbb{A}^1$-weakly equivalent to $S^{3,2}$ in $\mathbf{H}_{\bullet}(k)$ (Theorem \ref{poincare}). We also show that there are smooth quasi-affine $k$-threefolds which are not isomorphic to $\mathbb{A}^3_{k} \setminus \left\{(0,0, 0) \right\}$ but $\mathbb{A}^1$-homotopic in $\mathbf{H}_{\bullet}(k)$ to $S^{5,3}$ (Theorem \ref{not poincare}). 

\

\noindent\textbf{Acknowledgements:} The authors would like to thank Chetan Balwe, Fabien Morel, Aravind Asok, Anand Sawant, Joseph Ayoub, Suraj Yadav, Anand Sawant for their helpful comments and suggestions. We want to speacially thank Chetan Balwe for giving suggestions to make the proof of Proposition \ref{induction separable} simpler.
This article is a part of the Ph.D thesis of the second author and the second author would like to thank Indian Statistical Institute for providing all the resources during this work.

\section{$\mathbb{A}^1$-connectedness and log uniruledness}
In this section we show that an $\mathbb{A}^1$-connected variety contains a family of affine lines (Theorem \ref{A1-connected dominant}). We also prove that a smooth affine $k$-surface $X$ with $\pi_0^{\mathbb{A}^1}(X)(Spec \ k)$ trivial, either contains a dominant family of affine lines in $X$ or through every point there is an affine line in $X$ and there exists a point $x \in X(k)$ through which intersecting $\mathbb{A}^1$'s pass  (Proposition \ref{connected implies}). Throughout the section we assume $k$ to be an algebraically closed field unless otherwise mentioned.

\

Suppose $\mathcal{F}$ is a Nisnevich sheaf on $Sm/k$ and $X, W \in Sm/k$. A non-constant homotopy (\cite[Definition 4.4]{cb}) $H \in \mathcal{F}(\mathbb{A}^1_W)$ is such that $H(0) \neq H(1) \in \mathcal{F}(W)$, where $H(0)$ and $H(1)$ are induced by the $0$-section and the $1$-section from $W$ to $\mathbb{A}^1_W$ respectively.
\begin{remark} \label{useful remark}
A ghost homotopy \cite[Definition 3.2]{bhs} $H \in \mathcal{S}(\mathcal{F})(\mathbb{A}^1_W)$ is given by an element $\phi \in \mathcal{F}(V)$, where $V \to \mathbb{A}^1_W$ is a Nisnevich covering such that there is a Nisnevich covering $V^{\prime} \to V \times_{\mathbb{A}^1_W} V$ along with there is a chain of $\mathbb{A}^1$-homotopies $G_1, G_2,.., G_n \in \mathcal{F}(\mathbb{A}^1_{V^{\prime}})$ such that $G_1(0) = p_1^*(\phi)|_{V^{\prime}}$ and $G_n(1)=p_2^*({\phi})|_{V^{\prime}}$ (where $p_1, p_2 : V \times_{\mathbb{A}^1_W} V \to V$ are the projection maps). If $ p_1^*(\phi)|_{V^{\prime}} =  p_2^*(\phi)|_{V^{\prime}}$, then $H$ can be lifted to an element $H^{\prime} \in \mathcal{F}(\mathbb{A}^1_W)$. Therefore if $H$ is a non-constant homotopy, then $ p_1^*(\phi)|_{V^{\prime}} \neq  p_2^*(\phi)|_{V^{\prime}}$ and thus we can assume all the $\mathbb{A}^1$-homotopies $G_i$'s are non-constant \cite[Remark 4.5]{cb}.

\end{remark}
The next proposition (Proposition \ref{induction separable}) gives a way of constructing non-constant $\mathbb{A}^1$-homotopy in $\mathcal{S}^m(X)$ starting from a non-constant homotopy in $\mathcal{S}^n(X)$ whenever $n > m$. In this respect the proposition is similar to \cite[Proposition 4.7]{cb}. However, the difference is that in \cite[Proposition 4.7]{cb} the algorithm requires fixing a closed point but in the following proposition the argument is more generic in nature. 
\begin{proposition} \label{induction separable}
Suppose $X, W \in Sm/k$ are irreducible schemes, $n \geq 1$ and $H \in \mathcal{S}^n(X)(\mathbb{A}^1_W)$ is a non-constant homotopy such that $H(0)$ factors through $X$ and $H(0) : W \to X$ is a dominant morphism. Then there is some $m < n$, $W^{\prime} \in Sm/k$ irreducible and a non-constant homotopy $H^{\prime} \in \mathcal{S}^m(X)(\mathbb{A}^1_{W^{\prime}})$ such that $H^{\prime}(0)$ factors through $X$ and $H^{\prime}(0):W^{\prime} \to X$ is a dominant morphism.
\end{proposition}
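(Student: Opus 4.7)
My plan is to take $m = n-1$ and extract $H'$ from one of the internal $\mathbb{A}^1$-homotopies appearing in the ghost-homotopy presentation of $H$. By Remark \ref{useful remark}, the non-constant $H$ is witnessed by data $(\pi: V \to \mathbb{A}^1_W,\ \phi \in \mathcal{S}^{n-1}(X)(V),\ V' \to V \times_{\mathbb{A}^1_W} V,\ G_1,\dots,G_r)$, where $\pi$ and $V' \to V \times_{\mathbb{A}^1_W} V$ are Nisnevich covers and the $G_j \in \mathcal{S}^{n-1}(X)(\mathbb{A}^1_{V'})$ are non-constant $\mathbb{A}^1$-homotopies chained by $G_j(1) = G_{j+1}(0)$, with $G_1(0) = p_1^*\phi|_{V'}$ and $G_r(1) = p_2^*\phi|_{V'}$. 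After discarding irreducible components, I may assume $V$ and $V'$ are smooth and irreducible.

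The hypothesis that $H(0)$ factors through the dominant map $f: W \to X$ feeds into this data via Nisnevich descent: the restriction $\phi|_{V_0}$ to $V_0 := V \times_{\mathbb{A}^1_W} W$ (the zero-section fiber) agrees with the pullback $V_0 \to W \xrightarrow{f} X$. In particular, $p_1^*\phi$ and $p_2^*\phi$ restricted to $V'_0 := V' \times_{\mathbb{A}^1_W} W$ coincide and both factor through $X$ as the pullback of $f$, which is dominant on the components that dominate $W$.

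I now pick $W'$ to be a smooth irreducible component of $V'$ whose intersection with $V'_0$ contains a component mapping dominantly onto $W$, and hence dominantly onto $X$; such a component exists because a composition of Nisnevich covers preserves dominance on at least one component of each geometric fiber. I then set $H' := G_j|_{\mathbb{A}^1_{W'}}$ for an appropriately chosen index $j \in \{1,\ldots,r\}$. The endpoint $H'(0)|_{W' \cap V'_0}$ factors through $X$ dominantly by the previous paragraph; irreducibility of $W'$ together with Nisnevich descent then propagate this to a factoring $H'(0): W' \to X$ that is automatically dominant, since dominance holds on a (non-empty) subscheme mapping onto $X$.

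The main obstacle is to ensure that for some $j$ the restriction $G_j|_{\mathbb{A}^1_{W'}}$ remains non-constant. If \emph{every} $G_j$ became constant after restriction to $\mathbb{A}^1_{W'}$, then on $W'$ the whole chain would collapse to the equality $p_1^*\phi|_{W'} = p_2^*\phi|_{W'}$; pushed through the Nisnevich covers, this would (by the lifting statement of Remark \ref{useful remark}) contradict the non-constancy of $H$ itself. This is the delicate combinatorial core of the argument, and the simplification credited to Chetan Balwe in the acknowledgements likely consists of an elegant generic choice of $W'$ that bypasses the need to fix a closed point (as was done in the analogous \cite[Proposition 4.7]{cb}) and makes this non-vanishing statement transparent.
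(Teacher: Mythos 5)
Your reduction to a single irreducible component $W'$ of $V'$ chosen for dominance, followed by the claim that some $G_j|_{\mathbb{A}^1_{W'}}$ must remain non-constant, is where the argument breaks. Non-constancy of $H$ only tells you that $p_1^*\phi|_{V'} \neq p_2^*\phi|_{V'}$, i.e.\ that the chain is non-constant on \emph{some} irreducible component of $V'$; it does not tell you this holds on the particular component $W'$ you selected so that its image in $V$ dominates $X$. The component carrying the non-constancy and the component carrying the dominance can be different, and on $W'$ the whole chain may collapse to $p_1^*\phi|_{W'} = p_2^*\phi|_{W'}$ in $\mathcal{S}^{n-1}(X)(W')$ without contradicting anything. (Relatedly, you cannot ``discard irreducible components and assume $V$, $V'$ irreducible'': the descent datum lives over all the products $V_i \times_{\mathbb{A}^1_W} V_j$ of components of $V$, and the non-constancy of $H$ is a statement about their totality. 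A smaller but real issue of the same flavour: $H(0)$ and the restriction of $\phi$ to the zero-section fiber agree a priori only after passing to $\mathcal{S}^n(X)$, not as sections of $\mathcal{S}^{n-1}(X)$ or as morphisms to $X$, so dominance does not transfer to $\phi$ by bare Nisnevich descent; the paper handles this by shrinking $W$ to a component of the fiber product and replacing $H$ accordingly.)

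The paper's proof confronts exactly the scenario you wave away, and its resolution forces $m$ to drop below $n-1$ in general --- which is why the proposition asserts only ``some $m < n$'' rather than the $m = n-1$ you fix at the outset. After lifting $H$ to an honest morphism $\phi \colon V \to X$ and arranging that the zero section lands in a component $V_l$ with $\phi|_{V_l}$ dominant, the paper splits into cases according to whether all closures $\overline{\phi(V_i)}$ coincide. If they do, a strategy like yours works at level $n-1$. If not, and if on every component of $V'$ lying over a dominating $V_m$ the two pullbacks of $\phi$ agree in $\mathcal{S}^{n-1}(X)$, the paper locates the largest $t$ with $p_1^*\phi \neq p_2^*\phi$ in $\mathcal{S}^{t}(X)$ on such a component: when such a $t < n-1$ exists, the equality in $\mathcal{S}^{t+1}(X)$ is unwound into a new ghost homotopy from which a non-constant homotopy at level $t$ with dominant $0$-section is extracted; when the pullbacks agree already as morphisms to $X$, the fact that both projections are dominant \'etale maps forces all $\overline{\phi(V_i)}$ to coincide, a contradiction. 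None of this descent-to-lower-levels machinery appears in your proposal, and without it the non-vanishing you call ``the delicate combinatorial core'' is simply false at level $n-1$.
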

\begin{proof}

The first part of the proof goes as in \cite[Proposition 4.7]{cb}. The canonical morphism $\eta: X \to \mathcal{S}^n(X)$ is an epimorphism. So given $H \in \mathcal{S}^n(X)(\mathbb{A}^1_W)$, there is a Nisnevich covering $f:V \to \mathbb{A}^1_W$ along with a morphism $\phi: V \to X$ making the diagram
 $$\begin{tikzcd}
 V \ar[r, "\phi"] \ar[d, "f"] &X \ar[d]\\
\mathbb{A}^1_{W} \ar[r,"H"] 
&\mathcal{S}^n(X)
\end{tikzcd}$$ 
commutative and there is a Nisnevich covering $V^{\prime} \to V\times_{\mathbb{A}^1_W} V$ along with a chain of non-constant (since $H$ is a non-constant homotopy, so $p_1^{*}(\phi)|_{V^{\prime}} \neq p_2^*(\phi)|_{V^{\prime}} \in \mathcal{S}^{n-1}(X)(V^{\prime})$ by Remark \ref{useful remark}, here $p_1, p_2: V \times_{\mathbb{A}^1_W} V \to V$ are the projection maps) $\mathbb{A}^1$-homotopies $G_1, G_2, \dots, G_t \in \mathcal{S}^{n-1}(X)(\mathbb{A}^1_{V^{\prime}})$ such that 
$$G_1(0) = p_1^*(\phi)|_{V^{\prime}} \text{ and } G_t(1) = p_2^*(\phi)|_{V^{\prime}}.$$ \par
  Suppose that $V_i$'s, $1 \leq i \leq q$ are the irreducible components of $V$, which are also the connected components. Then each irreducible component $V_0^{\prime}$ of $V^{\prime}$ maps to $V_i \times_{\mathbb{A}^1_W} V_j$ for some $i, j$ (note that each $V_i \times_{\mathbb{A}^1_W} V_j$ is non-empty, since $W$ is irreducible) and there are dominant maps (\'etale maps) from $V_0^{\prime}$ to $V_i$ (for some $i$) induced by $p_1$ and $p_2$. Consider the following commutative diagram induced by the $0$-section of $H$:
$$\begin{tikzcd}[column sep=50pt, row sep=50 pt]
U \ar[r, "\theta"] \ar[d, "f^{\prime}"] &V \ar[r, "\phi"] \ar[d, "f" near start] &X \ar[d]\\
W \ar[r,"i_0" below] \ar[urr, dotted, "H(0)" near start] &\mathbb{A}^1_{W} \ar[r, "H" below] &\mathcal{S}^n(X)
\end{tikzcd}$$
where $i_0 : W \to \mathbb{A}^1_W$ is the $0$-section and the left square is cartesian. Thus $f^{\prime}$ is a Nisnevich covering. Here the lower triangle is commutative, since $H(0)$ factors through $X$. The upper triangle is not commutative in general. However after shrinking $W$, we can always assume that $H(0)$ lifts to $V$ i.e. there is a morphism $\theta^{\prime}: W \to V$ such that $H(0)$ factors as the morphism $\theta^{\prime}: W \to V$, followed the morphism $\phi: V\to X$. Indeed, suppose that $U_1, U_2,.., U_d$ are the connected components of $U$. If all the homotopies $H_i:\mathbb{A}^1_{U_i} \to \mathcal{S}^n(X)$ induced by the composition 
$$\mathbb{A}^1_{U_i} \xrightarrow{(Id, f^{\prime})} \mathbb{A}^1_W \xrightarrow{H} \mathcal{S}^n(X)$$
are constant i.e. $H_i(0) = H_i(1)$, then $H(0) = H(1)$. It is not possible, since $H$ is a non-constant homotopy. Therefore there is some $i$ such that $H_i$ is non-constant. The $0$-section $H_i(0) = H(0)|_{U_i}$, so $H_i(0)$ factors through $X$ and $H_i(0): U_i \to X$ is dominant. We then replace the homotopy $H$ by $H_i$. Therefore we always have the following commutative diagram (after shrinking $W$):
$$\begin{tikzcd}[column sep=50pt, row sep=50 pt]
 &V \ar[r, "\phi"] \ar[d, "f" near start] &X \ar[d]\\
W \ar[r, "i_0" below]  \ar[ru, "\theta"] \ar[urr, dotted] &\mathbb{A}^1_{W} \ar[r, "H" below] &\mathcal{S}^n(X)
\end{tikzcd}
$$
Since $W$ is irreducible and $H(0)$ is dominant, so $W$ maps to some irreducible component of $V$ (say $V_l$) and $\phi|_{V_l}$ is dominant.

We analyse the following cases:



\textbf{Case 1:} Suppose that $\overline{\phi(V_i)}$ are same for all $i$. 
In this case $\phi|_{V_i}$ is also dominant for every $i$. 
There is some irreducible component $V_0^{\prime}$ of $V^{\prime}$ such that $G_1|_{\mathbb{A}^1_{V_0^{\prime}}}$ is a non-constant homotopy. If $V_0^{\prime}$ maps to $V_j \times_{\mathbb{A}^1_{W}} V_l$ for some $j, l$, the composition $$V_0^{\prime} \to V_j \times_{\mathbb{A}^1_{W}} V_l \xrightarrow{p_1} V_j \xrightarrow{\phi|_{V_j}} X$$
is dominant since the map $V_0^{\prime} \to V_j$ is \'etale. Thus in this case $G_1|_{\mathbb{A}^1_{V_0^{\prime}}}$ is the required homotopy. \par
\textbf{Case 2:} Suppose that there are $i, j$ such that  $\overline{\phi(V_i)} \neq \overline{\phi(V_j)}$. 
Thus if needed renumbering all $V_m$-s we can assume that $\overline{\phi(V_1)}, \overline{\phi(V_2)}, \dots, \overline{\phi(V_s)} = X$ and $ \overline{\phi(V_{s+1})}, \dots, \overline{\phi(V_q)} \subsetneqq X$. If there is some irreducible component $V_0^{\prime}$ of $V^{\prime}$ that maps to $V_m \times_{\mathbb{A}^1_W} V_l$ with $m \leq s$ and $p_1^*(\phi)|_{V_0^\prime} \neq p_2^*(\phi)|_{V_0^\prime}$ in $\mathcal{S}^{n-1}(X)(V_0^\prime)$, then same as Case 2 there is some $p$ such that $G_p|_{\mathbb{A}^1_{V_0^\prime}}$ is a non-constant homotopy and $G_p|_{\mathbb{A}^1_{V_0^\prime}}(0) = p_1^*(\phi)|_{V_0^\prime}$. 

Thus we can assume that for each irreducible component $V_0^{\prime}$ of $V^{\prime}$ that maps to $V_m \times_{\mathbb{A}^1_W} V_l$ with $m \leq s$ we have, $p_1^*(\phi)|_{V_0^{\prime}} = p_2^*(\phi)|_{V_0^\prime}$ in $\mathcal{S}^{n-1}(X)(V_0^\prime)$. 
Therefore we have for every $m \leq s$, 
\begin{gather*}
p_1^*(\phi)|_{V_m \times_{\mathbb{A}^1_W} V_l} = p_2^*(\phi)|_{V_m \times_{\mathbb{A}^1_W} V_l} \in \mathcal{S}^{n-1}(X)(V_m \times_{\mathbb{A}^1_W} V_l).
\end{gather*}
   Here we have following two subcases: \par
                 \textbf{Subcase 1:}  Suppose that there is a $t < n-1$ and an irreducible component $W_0^\prime$ of $V^{\prime}$ that maps to $V_m \times_{\mathbb{A}^1_W} V_l$ for some $m,l$ with $m \leq s$ such that $$p_1^*(\phi)|_{W_0^\prime} \neq p_2^*(\phi)|_{W_0^\prime} \in \mathcal{S}^{t}(X)(W_0^\prime).$$ Since $p_1^*(\phi)|_{W_0^\prime}$ and $p_2^*(\phi)|_{W_0^\prime}$ are same in $\mathcal{S}^{n-1}(X)(W_0^\prime)$, we choose $t$ such that $p_1^*(\phi)|_{W_0^\prime}$ is same with $p_2^*(\phi)|_{W_0^\prime}$ in $\mathcal{S}^{t+1}(X)(W_0^\prime)$ and $$p_1^*(\phi)|_{W_0^\prime} \neq p_2^*(\phi)|_{W_0^\prime} \in \mathcal{S}^{t}(X)(W_0^\prime).$$ 
Then there is a Nisnevich covering $V^{\prime \prime} \to W_0^\prime$ and a non-constant homotopy (by Remark \ref{useful remark}) $H^{\prime} \in \mathcal{S}^t(X)(\mathbb{A}^1_{V^{\prime \prime}})$ such that $H^{\prime}(0) = p_1^*(\phi)|_{V^{\prime \prime}}$. So there is an irreducible component $W^{\prime\prime}_0$ of $V^{\prime \prime}$ such that $H^{\prime}|_{\mathbb{A}^1_{W^{\prime\prime}_0}}$ is non-constant. Since the map $W_0^{\prime\prime} \to V_m$ is \'etale and $\phi|_{V_m}$ is dominant, $H^{\prime}|_{\mathbb{A}^1_{W^{\prime\prime}_0}}(0)$ is dominant.  \par
                  \textbf{Subcase 2:} Suppose that for every irreducible component $V_0^\prime$ of $V^{\prime}$ that maps to $V_m \times_{\mathbb{A}^1_W} V_l$ for some $m \leq s$, we have $p_1^*(\phi)|_{V_0^\prime} = p_2^*(\phi)|_{V_0^\prime}$ as morphisms to $X$. Therefore we have, 
\begin{gather*} 
p_1^*(\phi)|_{V_m \times_{\mathbb{A}^1_W} V_l} = p_2^*(\phi)|_{V_m \times_{\mathbb{A}^1_W} V_l}, \ \forall m \leq s \ \forall l
\end{gather*}
as morphisms to $X$. But then all $\overline{\phi(V_l)}$ are same for every $l$, since $p_1: V_m \times_{\mathbb{A}^1_W} V_l \to V_m$ and $p_2: V_m \times_{\mathbb{A}^1_W} V_l \to V_l$ are dominant (\'etale) maps. It is a contradiction, since we have assumed there are $i, j$ such that  $\overline{\phi(V_i)} \neq \overline{\phi(V_j)}$.  \par
Therefore, the proposition is proved.
\end{proof}
\begin{theorem} \label{A1-connected dominant}
Suppose $X \in Sm/k$ is an irreducible, $\mathbb{A}^1$-connected scheme. Then there is some $W \in Sm/k$ with $W$  irreducible and a non-constant homotopy $H: \mathbb{A}^1_W \to X$ such that $H$ is a dominant morphism.
\end{theorem}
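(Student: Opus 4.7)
The plan is to run the descent of Proposition \ref{induction separable} all the way down to $n = 0$. At the bottom, $\mathcal{S}^0(X)(\mathbb{A}^1_W) = \Hom(\mathbb{A}^1_W, X)$, so a non-constant homotopy $H : \mathbb{A}^1_W \to X$ whose $0$-section $H(0) : W \to X$ is dominant is automatically a dominant morphism of schemes, which is precisely what the theorem claims. Since Proposition \ref{induction separable} feeds itself, the only real task is the bootstrap: producing some $n \geq 1$, an irreducible $W \in Sm/k$, and a non-constant homotopy $H \in \mathcal{S}^{n-1}(X)(\mathbb{A}^1_W)$ whose $0$-section factors through $X$ as a dominant morphism.

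For the bootstrap, I would exploit $\mathbb{A}^1$-connectedness. Assuming $\dim X \geq 1$, choose $x_0 \in X(k)$ and consider the two elements $\mathrm{Id}_X, c_{x_0} \in X(X) = \mathcal{S}^0(X)(X)$; they are distinct but must become equal in $\pi_0^{\mathbb{A}^1}(X)(X) = \bullet$. Hence there is a minimal $n \geq 1$ such that, after some Nisnevich refinement $V' \to X$, there is a chain of $\mathbb{A}^1$-homotopies $H_1, \dots, H_t \in \mathcal{S}^{n-1}(X)(\mathbb{A}^1_{V'})$ with $H_1(0) = \mathrm{Id}|_{V'}$ and $H_t(1) = c_{x_0}|_{V'}$, via the ghost-homotopy description of $\mathcal{S}^n = \mathcal{S}(\mathcal{S}^{n-1})$ recalled in Remark \ref{useful remark}. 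Pick an irreducible component $V'_0$ of $V'$ dominating $X$, which exists because $V' \to X$ is Nisnevich and $X$ is irreducible; then $V'_0 \hookrightarrow V' \to X$ is dominant and $V'_0$ is smooth and irreducible.

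If some restriction $H_i|_{\mathbb{A}^1_{V'_0}}$ is non-constant, let $i_0$ be the smallest such index. Each preceding $H_j|_{\mathbb{A}^1_{V'_0}}$ is constant, hence $H_{i_0}(0) = H_1(0) = \mathrm{Id}|_{V'_0}$, which factors through $X$ as the dominant morphism $V'_0 \to X$; this is the required bootstrap. If instead every $H_i|_{\mathbb{A}^1_{V'_0}}$ is constant, then $\mathrm{Id}|_{V'_0} = c_{x_0}|_{V'_0}$ already in $\mathcal{S}^{n-1}(X)(V'_0)$, and I would rerun the argument with $V'_0$ replacing $X$ and the pair $(\mathrm{Id}|_{V'_0}, c_{x_0}|_{V'_0})$ replacing $(\mathrm{Id}_X, c_{x_0})$. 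The minimal level at which equality holds strictly drops, and it cannot reach $0$, because a dominant morphism and a constant morphism into $X$ are never equal in $\mathcal{S}^0(X) = X$ when $\dim X \geq 1$; thus the favorable branch must occur after finitely many iterations.

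The main obstacle is this bootstrap recursion: checking that the "all restrictions constant" branch really gives a strict descent at the level of $\mathcal{S}^{\bullet}$, so that the recursion terminates and genuinely produces a non-constant homotopy with dominant $0$-section. Once the bootstrap is in hand, iterating Proposition \ref{induction separable} finitely many times brings the level down to $0$, producing the desired non-constant, dominant $H : \mathbb{A}^1_W \to X$.
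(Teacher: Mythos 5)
Your proposal is correct and follows essentially the same route as the paper: bootstrap a non-constant homotopy with dominant $0$-section by comparing $\mathrm{Id}_X$ with a constant map $c_{x_0}$, use that every irreducible component of a Nisnevich cover of the irreducible $X$ is \'etale and hence dominant over $X$, and then iterate Proposition \ref{induction separable} down to level $0$; the paper sidesteps your terminating recursion by instead fixing a cover $Y \to X$ and the least $n$ with $f \neq C_{x_0}|_Y$ in $\mathcal{S}^{n}(X)(Y)$ but $f = C_{x_0}|_Y$ in $\mathcal{S}^{n+1}(X)(Y)$, which forces a non-constant link in the resulting chain directly. The one step you should make explicit is that passing from triviality of $\pi_0^{\mathbb{A}^1}(X)$ to equality of $\mathrm{Id}_X$ and $c_{x_0}$ at a \emph{finite} stage $\mathcal{S}^n(X)$ on a Nisnevich cover is not formal (as $\pi_0^{\mathbb{A}^1}(X)$ is only a quotient of the colimit sheaf $\mathcal{L}(X)$) and requires \cite[Corollary 2.18]{bhs}, exactly as in the paper's proof.
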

\begin{proof}

As $X$ is $\mathbb{A}^1$-connected, the sheaf $\mathcal{L}(X)$ is trivial \cite[Corollary 2.18]{bhs} and $X$ has a $k$-rational point (say $x_0 \in X(k)$). Consider two morphisms $Id: X \to X$ (the identity map on $X$) and $C_{x_0}:X \to Spec \ k \xrightarrow{x_0} X$. Since $\mathcal{L}(X)$ is trivial, $Id$ and $C_{x_0}$ are same in $\mathcal{L}(X)(X)$. 
Then there is a Nisnevich covering $f : Y \to X$ and some $m \geq 1$ such that $f$ and $C_{x_0}|_Y$ are same in $\mathcal{S}^{m}(X)(Y)$. Choose least $n \geq 0$ such that $f$ and $C_{x_0}|_Y$ are same in $\mathcal{S}^{n+1}(X)(Y)$and they are not same in $\mathcal{S}^n(X)(Y)$. Thus there is a Nisnevich covering $f^{\prime}:Y^{\prime} \to Y$ and there are chain of non-constant $\mathbb{A}^1$-homotopies $G_1, G_2,..., G_t \in \mathcal{S}^n(X)(\mathbb{A}^1_{Y^{\prime}})$ such that $G_1(0) = f \circ f^{\prime}$ and $G_t(1)=C_{x_0}|_{Y^{\prime}}$. There is some irreducible component $Y_0$ of $Y^{\prime}$ such that $G_1|_{\mathbb{A}^1_{Y_0}}$ is non-constant. Since the restriction $f \circ f^{\prime}|_{Y_0}: Y_0 \to X$ is \'etale, $G_1|_{\mathbb{A}^1_{Y_0}}(0)$ is dominant. 
Therefore applying Proposition \ref{induction separable} repeatedly, there is a non-constant homotopy $H : \mathbb{A}^1_{W} \to X$ for some $W \in Sm/k$ irreducible such that $H(0)$ is a dominant morphism. 
\end{proof}

\begin{corollary} \label{connected Kodaira dimension 1}
Suppose $X \in Sm/k$ is an $\mathbb{A}^1$-connected $k$-variety, where $k$ is an algebraically closed field of characteristic zero. Then $X$ has negative logarithmic Kodaira dimension.
\end{corollary}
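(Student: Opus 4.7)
The plan is to combine Theorem \ref{A1-connected dominant} with classical results from the Iitaka--Miyanishi theory of logarithmic Kodaira dimension $\bar{\kappa}$.

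By Theorem \ref{A1-connected dominant} there exist an irreducible smooth $k$-variety $W$ and a non-constant homotopy $H : \mathbb{A}^1_W \to X$ which is dominant. First I would promote $H$ to a witness of $\mathbb{A}^1$-uniruledness of $X$ in the sense of the introduction. Since $\Char(k) = 0$, after shrinking $W$ to a smooth affine open and applying Bertini repeatedly, I can cut out a smooth irreducible closed subscheme $Y \subset W$ of dimension $\dim X - 1$ by a sufficiently general complete intersection, in such a way that the restriction $H|_{\mathbb{A}^1 \times Y} : \mathbb{A}^1 \times Y \to X$ remains dominant (a dimension count on the generic fibre of $H$ guarantees this for a generic choice). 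Since source and target then have equal dimension, this restriction is automatically generically finite, so $X$ is $\mathbb{A}^1$-uniruled.

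To conclude, I would invoke the classical fact that a log-uniruled smooth variety over a characteristic zero field has $\bar{\kappa} = -\infty$. This rests on two ingredients. First, $\bar{\kappa}(\mathbb{A}^1 \times Y) = -\infty$: fixing an SNC compactification $(\bar{Y}, \bar{D}_Y)$ of $Y$ and using $\bigl(\mathbb{P}^1 \times \bar{Y}, \, \{\infty\} \times \bar{Y} + \mathbb{P}^1 \times \bar{D}_Y\bigr)$ as a log-smooth compactification of $\mathbb{A}^1 \times Y$, a direct K\"unneth computation yields $H^0(\mathbb{P}^1 \times \bar{Y}, m(K+D)) = 0$ for all $m \geq 1$, because $H^0(\mathbb{P}^1, m(K_{\mathbb{P}^1} + \{\infty\})) = H^0(\mathbb{P}^1, \mathcal{O}(-m)) = 0$. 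Second, $\bar{\kappa}(X) \leq \bar{\kappa}(Z)$ for any dominant generically finite morphism $Z \to X$ between smooth varieties in characteristic zero. Applied to $\mathbb{A}^1 \times Y \to X$, these give $\bar{\kappa}(X) \leq \bar{\kappa}(\mathbb{A}^1 \times Y) = -\infty$.

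The main obstacle is the first step: carrying out the Bertini-and-hyperplane-section reduction carefully so that the restricted morphism remains dominant. This is also where the hypothesis $\Char(k) = 0$ enters crucially, both for Bertini smoothness and for the standard behaviour of $\bar{\kappa}$ under generically finite covers. Everything past that point is a routine appeal to well-known results from the theory of open algebraic varieties.
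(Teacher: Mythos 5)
Your proposal is correct and follows essentially the same route as the paper: invoke Theorem \ref{A1-connected dominant}, reduce to a dominant generically finite map from $\mathbb{A}^1_k \times_k Y$ with $\dim Y = \dim X - 1$, and conclude via the product formula for $\bar{\kappa}$ together with its behaviour under dominant generically finite morphisms (the paper simply cites Iitaka, Theorem 11.3 and Proposition 11.4, for the last two facts). You merely fill in details the paper asserts without proof, namely the Bertini-type reduction to $\dim Y = \dim X - 1$ and the computation $\bar{\kappa}(\mathbb{A}^1 \times Y) = -\infty$.
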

\begin{proof}
Since $X$ is $\mathbb{A}^1$-connected, by Theorem \ref{A1-connected dominant}, there
is a non-constant homotopy $H: \mathbb{A}^1_k \times_k Y \to X$ such that $Y$ is irreducible and $H$ is dominant. Infact we can take $dim(Y) = dim(X) -1$. By \cite[Theorem 11.3]{iitaka}, the variety $\mathbb{A}^1_k \times_k Y$ has logarithmic Kodaira dimension $-\infty$. Thus $X$ has logarithmic Kodaira dimension $-\infty$ by \cite[Proposition 11.4]{iitaka}. 
\end{proof}


\

By the phrase ``there is an $\mathbb{A}^1$ in $W$" for some $W \in Sm/k$, we mean that there is a non-constant morphism $\phi: \mathbb{A}^1_k \to W$. If the base field $k$ is algebraically closed, then an $\mathbb{A}^1$-connected variety $X$ has always a non-constant $\mathbb{A}^1$ in $X$ (Theorem \ref{A1-connected dominant}, \cite[Corollary 4.10]{cb}). But if $k$ is not algebraically closed, then it is not true (Remark \ref{not algebraically closed}). However, over a general field $k$ an $\mathbb{A}^1$-connected variety always admits a non-constant morphism $H: \mathbb{A}^1_L \to X$, for some finite separable field extension $L/k$ (Proposition \ref{separable non-constant}). 
\begin{lemma} \label{induction}
Let $k$ be a field and $X \in Sm/k$. Suppose that there is a non-constant homotopy $H \in \mathcal{S}^n(X)(\mathbb{A}^1_W)$ for some $W \in Sm/k$ such that $W$ is irreducible. Then there is a non-constant homotopy $H^{\prime} \in \mathcal{S}^{n-1}(X)(\mathbb{A}^1_{W^{\prime}})$ for some $W ^{\prime} \in Sm/k$ such that $W^{\prime}$ is irreducible.
\end{lemma}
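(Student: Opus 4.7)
The plan is to reproduce the opening moves of the proof of Proposition \ref{induction separable}, while discarding all the bookkeeping needed to preserve dominance there. Because the conclusion only asks for a non-constant homotopy (with no dominance constraint), most of the case analysis in Proposition \ref{induction separable} collapses, and the argument works over an arbitrary field $k$.

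First, since $\eta : X \to \mathcal{S}^n(X)$ is an epimorphism of Nisnevich sheaves, the given $H$ lifts, after passing to a Nisnevich covering $f : V \to \mathbb{A}^1_W$, to a morphism $\phi : V \to X$ fitting into a commutative square as in Proposition \ref{induction separable}. Unfolding the definition of $\mathcal{S}^n(X)$ as ghost homotopies in $\mathcal{S}^{n-1}(X)$, there is a Nisnevich covering $V^{\prime} \to V \times_{\mathbb{A}^1_W} V$ and a chain of $\mathbb{A}^1$-homotopies $G_1, G_2, \dots, G_t \in \mathcal{S}^{n-1}(X)(\mathbb{A}^1_{V^{\prime}})$ with
$$G_1(0) = p_1^*(\phi)|_{V^{\prime}} \text{ and } G_t(1) = p_2^*(\phi)|_{V^{\prime}}.$$
Second, since $H$ is non-constant, Remark \ref{useful remark} yields $p_1^*(\phi)|_{V^{\prime}} \neq p_2^*(\phi)|_{V^{\prime}}$ in $\mathcal{S}^{n-1}(X)(V^{\prime})$, and further allows us to assume that every $G_i$ is non-constant.

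Third, pick an irreducible component $V_0^{\prime}$ of $V^{\prime}$ on which $G_1(0)$ and $G_1(1)$ still disagree. Such a component must exist: if $G_1(0)|_{V_0^{\prime}} = G_1(1)|_{V_0^{\prime}}$ on every irreducible component of $V^{\prime}$, then (since these components cover $V^{\prime}$) the two sections would agree on $V^{\prime}$, contradicting non-constancy of $G_1$. Note that $V \times_{\mathbb{A}^1_W} V$ is smooth (being étale over $\mathbb{A}^1_W \in Sm/k$), hence so is $V^{\prime}$, and therefore $V_0^{\prime}$ is smooth and irreducible. Setting $W^{\prime} := V_0^{\prime}$ and $H^{\prime} := G_1|_{\mathbb{A}^1_{V_0^{\prime}}}$ gives the required non-constant homotopy in $\mathcal{S}^{n-1}(X)(\mathbb{A}^1_{W^{\prime}})$.

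There is no real obstacle here beyond the bookkeeping: the only point that needs a moment of care is that non-constancy of $G_1$ as a section over $V^{\prime}$ forces non-constancy of $G_1$ on at least one irreducible component, and that such a component is automatically smooth (inheriting smoothness from $V^{\prime}$, which is étale over the smooth scheme $V \times_{\mathbb{A}^1_W} V$). In contrast to Proposition \ref{induction separable}, we do not need to select a component mapping to some specific $V_i$ with $\phi|_{V_i}$ dominant, nor to perform the Case~1/Case~2/Subcase analysis comparing the closures $\overline{\phi(V_i)}$; this is precisely why the hypothesis of algebraic closure of $k$ is not needed here.
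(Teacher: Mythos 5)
Your proof is correct and follows essentially the same route as the paper's: unfold the ghost-homotopy data of $H$ over a Nisnevich covering $V \to \mathbb{A}^1_W$, invoke Remark \ref{useful remark} to arrange that $G_1$ is non-constant, and restrict to an irreducible component of $V^{\prime}$ on which it remains non-constant. The additional details you supply (lifting all the way to $X$ via the epimorphism $\eta$, and the smoothness of $V^{\prime}$ guaranteeing that irreducible components are disjoint) are harmless elaborations of the same argument.
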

\begin{proof}
The homotopy $H \in \mathcal{S}^n(X)(\mathbb{A}^1_W)$ is given by a Nisnevich covering $V \to \mathbb{A}^1_W$ along with $G \in \mathcal{S}^{n-1}(X)(V)$ such that there is a Nisnevich covering $V^{\prime} \to V \times_{\mathbb{A}^1_W} V$ with $p_1^*(G)|_{V^{\prime}} = p_2^*(G)|_{V^{\prime}}$ in $\mathcal{S}^{pre}(\mathcal{S}^{n-1}(X))(V^{\prime})$ (here $p_1, p_2 : V \times_{\mathbb{A}^1_W} V \to V$ are the projection maps). Therefore there are $G_1, G_2, \dots G_l \in \mathcal{S}^{n-1}(X)(\mathbb{A}^1_{V^{\prime}})$ such that $G_1(0) = p_1^*(G)|_{V^{\prime}}$ and $G_l(1) = p_2^*(G)|_{V^{\prime}}$. Since $H$ is non-constant, we can assume that $G_1$ is non-constant by Remark \ref{useful remark}. Thus there is some irreducible component $W^{\prime}$ of $V^{\prime}$ such that $G_1|_{\mathbb{A}^1_{W^{\prime}}}$ is non-constant.  
\end{proof}
\begin{proposition} \label{separable non-constant}
Let $k$ be any field (not necessarily algebraically closed) and $X \in Sm/k$ be an $\mathbb{A}^1$-connected scheme. Suppose $X$ has at least two $k$-rational points. Then there is a non-constant homotopy $H:\mathbb{A}^1_L \to X$ for some finite separable field extension $L$ of $k$.
\end{proposition}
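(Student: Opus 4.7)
The plan is to mimic the argument of Theorem \ref{A1-connected dominant} with the pair of distinct $k$-rational points $(x_0, x_1)$ playing the role of $(\mathrm{Id}, C_{x_0})$, and then to add a specialization step at the end to pass from a smooth irreducible base $W$ to a closed point of $W$ with finite separable residue field. Concretely, I would proceed in three stages: (i) use triviality of $\mathcal{L}(X)$ to extract a non-constant ghost homotopy $G \in \mathcal{S}^{n-1}(X)(\mathbb{A}^1_k)$ over $\Spec k$ itself; (ii) apply Lemma \ref{induction} iteratively to descend to an honest non-constant morphism $H \colon \mathbb{A}^1_W \to X$ with $W \in Sm/k$ smooth and irreducible; (iii) restrict $H$ to the fiber over a carefully chosen closed point of $W$.

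For (i), since $\mathcal{L}(X)$ is trivial and every Nisnevich cover of $\Spec k$ admits $\Spec k$ as a connected component, the equality $x_0 = x_1$ in $\mathcal{L}(X)(\Spec k)$ forces $x_0 = x_1$ in $\mathcal{S}^m(X)(\Spec k)$ for some $m \geq 1$. Picking the minimal such $m =: n$ (so $n \geq 1$, since $x_0 \neq x_1$ in $X(k) = \mathcal{S}^0(X)(\Spec k)$), the identification $\mathcal{S}^n = \mathcal{S}(\mathcal{S}^{n-1})$ produces a Nisnevich cover $V \to \Spec k$ and a chain of $\mathbb{A}^1$-homotopies $G_1, \dots, G_t \in \mathcal{S}^{n-1}(X)(\mathbb{A}^1_V)$ with $G_1(0) = x_0|_V$, $G_t(1) = x_1|_V$ and $G_i(1) = G_{i+1}(0)$. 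Restricting the chain to the $\Spec k$-component of $V$ yields a chain in $\mathcal{S}^{n-1}(X)(\mathbb{A}^1_k)$ from $x_0$ to $x_1$; by the minimality of $n$, $x_0 \neq x_1$ in $\mathcal{S}^{n-1}(X)(\Spec k)$, so at least one piece $G$ of the chain is non-constant. For (ii), I apply Lemma \ref{induction} exactly $n-1$ times to $G$, starting from the irreducible base $\Spec k$; this yields the desired non-constant $H \in X(\mathbb{A}^1_W) = \mathcal{S}^0(X)(\mathbb{A}^1_W)$ with $W \in Sm/k$ irreducible (when $n = 1$ no iteration is needed and $W = \Spec k$ directly).

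For (iii), separatedness of $X$ makes the equalizer of the distinct morphisms $H(0), H(1) \colon W \to X$ a proper closed subscheme of $W$, whose open complement $U \subseteq W$ is non-empty. I claim $U$ contains a closed point $w$ with residue field $L := k(w)$ finite separable over $k$; the restriction $H|_{\mathbb{A}^1_L} \colon \mathbb{A}^1_L \to X$ is then the sought non-constant homotopy, since $H(0)$ and $H(1)$ disagree at $w$. The claim reduces, via the étale local structure of the smooth scheme $W$, to the Zariski-density of separable closed points in $\mathbb{A}^{\dim W}_k$: every morphism $\Spec k^{\mathrm{sep}} \to \mathbb{A}^n_k$ factors through a closed point with residue field finite separable over $k$, and since $k^{\mathrm{sep}}$ is infinite for any $k$, such morphisms meet every non-empty open. Étale morphisms then pull these points back to closed points of $W$ with finite separable residue field over $k$. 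The main obstacle is this last density argument, particularly for imperfect $k$, which I would handle uniformly by working with $k^{\mathrm{sep}}$-points rather than $k$-points of affine space.
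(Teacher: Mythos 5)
Your proposal follows essentially the same route as the paper's proof: use triviality of $\mathcal{L}(X)$ to produce a chain of $\mathbb{A}^1$-homotopies over $\Spec k$ linking the two rational points, descend via Lemma \ref{induction} to a non-constant homotopy $H\colon \mathbb{A}^1_W \to X$ with $W$ irreducible, and then specialize at a closed point of $W$ with finite separable residue field where $H(0)$ and $H(1)$ disagree (the paper cites the density of such points from the Stacks project, which you instead sketch directly, and you make explicit the openness of the locus $H(0)\neq H(1)$ via separatedness). The argument is correct and matches the paper's in all essential respects.
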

\begin{proof}
Suppose, $X \in Sm/k$ is an $\mathbb{A}^1$-connected scheme and $\alpha, \beta$ are two $k$-rational points in $X$. If there is a non-constant $\mathbb{A}^1$ in $X$ through $\alpha$ or $\beta$, we are done. Let us assume that there is no non-constant $\mathbb{A}^1$ in $X$ through $\alpha$ and $\beta$. Since $X$ is $\mathbb{A}^1$-connected, $\mathcal{L}(X)$ is trivial \cite[Corollary 2.18]{bhs}. Therefore, there is an $n \geq 1$ such that $\alpha = \beta \in \mathcal{S}^{n+1}(X)(Spec \ k)$ but $\alpha \neq \beta \in \mathcal{S}^n(X)(Spec \ k)$. Thus there is a chain of non-constant $\mathbb{A}^1$-homotopies (Remark \ref{useful remark}) $H_1, H_2, \dots, H_m \in \mathcal{S}^n(X)(\mathbb{A}^1_k)$ such that $H_1(0) = \alpha$ and $H_m(1) = \beta$. Therefore applying Lemma \ref{induction} repeatedly, there is a non-constant homotopy $G :\mathbb{A}^1_W \to X$ for some $W \in Sm/k$ such that $W$ is irreducible. Since $G$ is the non-constant homotopy on $\mathbb{A}^1_W$ and the set
   $$\{w \in W \text{ closed point} \mid k \subset k(w) \text{ is a finite separable extension}\}$$
is dense in $W$ \cite[Tag 056U]{stack}, there is some $w_0 \in W$ such that $G(0)(w_0) \neq G(1)(w_0)$ as morphisms from $W$ to $X$ and $k(w_0)/k$ is a finite separable extension. Therefore the composition given by 
$$\mathbb{A}^1_{k(w_0)} \xrightarrow{w_0 \times Id}  \mathbb{A}^1_W \xrightarrow{G} X$$
is a non-constant morphism $H: \mathbb{A}^1_{k(w_0)} \to X$.
\end{proof}
\begin{remark} \label{not algebraically closed}
If $k$ is not an algebraically closed field, then the separable extension $L$ in Proposition \ref{separable non-constant} can be a proper extension of $k$. For example, if $X$ is the real sphere $Spec(\frac{\mathbb{R}[X, Y, Z]}{(X^2 + Y^2 + Z^2 - 1)})$, there is no non-constant morphism from $\mathbb{A}^1_{\mathbb{R}}$ to $X$. Indeed, if $\bar{X}$ is the projective closure of $X$, any morphism $\phi: \mathbb{A}^1_{\mathbb{R}} \to X$ extends to a morphism $\bar{\phi}: \mathbb{P}^1_{\mathbb{R}} \to \bar{X}$. Since $X$ has no real points at infinity, the morphism $\bar{\phi}$ factors through $X \hookrightarrow \bar{X}$. As $X$ is an affine scheme, the morphism $\bar{\phi}$ is constant. However the ring homomorphism $f: \frac{\mathbb{R}[X, Y, Z]}{(X^2+ Y^2+ Y^2 - 1)} \to \mathbb{C}[T]$ given by $X \mapsto 1,\ Y \mapsto T, \ Z \mapsto iT$ defines a non-constant morphism $\bar{f}: \mathbb{A}^1_{\mathbb{C}} \to X$.
\end{remark}

\

\begin{lemma} \label{non-constant homotopy}
Suppose, $X$ is a smooth affine $k$-variety and $H \in \mathcal{S}^n(X)(\mathbb{A}^1_W)$ is a non constant homotopy, where $n \geq 1$ and $W$ smooth irreducible $k$-scheme. Let $f: V \to \mathbb{A}^1_W$ be a Nisnevich covering and suppose $\phi: V \to X$ is a morphism such that the epimorphism $\eta : X \to S^n(X)$ maps $\phi$ to $H|_V$. 

Then there are irreducible components $V_0$ and $V_0^{\prime}$ of $V$ such that there is no $\gamma: \mathbb{A}^1_k \to X$ so that the image $\gamma(\mathbb{A}^1_k)$ contains both $\overline{\phi(V_0)}$ and $\overline{\phi(V_0^{\prime})}$.
\end{lemma}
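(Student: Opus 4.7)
The plan is to argue by contradiction: assume that for every ordered pair of irreducible components $V_i, V_j$ of $V$ there exists a morphism $\gamma_{ij}: \mathbb{A}^1_k \to X$ whose image contains both $\overline{\phi(V_i)}$ and $\overline{\phi(V_j)}$, and derive a contradiction with the non-constancy of $H$.

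The first step is structural. For each pair $(i,j)$, the image closure $C_{ij} := \overline{\gamma_{ij}(\mathbb{A}^1_k)}$ is an irreducible closed subvariety of $X$ of dimension at most one, so each $\overline{\phi(V_i)}$ has dimension $\leq 1$. When two of them, say $\overline{\phi(V_i)}$ and $\overline{\phi(V_j)}$, are both one-dimensional, both sit inside the irreducible curve $C_{ij}$ and must therefore coincide. Hence all positive-dimensional $\overline{\phi(V_i)}$ equal a common irreducible affine curve $C \subseteq X$ and the zero-dimensional ones are points of $C$ (in the case where every $\overline{\phi(V_i)}$ is a point, we instead keep the pair-specific curve $C_{ij}$). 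The key observation is that since $X$ is affine, $C_{ij}$ is affine, and because $k$ is algebraically closed, a short calculation with units in $k[t]$ (a non-constant polynomial cannot be a unit) shows that the normalization $\tilde{C}_{ij}$ is isomorphic to $\mathbb{A}^1_k$ whenever $\gamma_{ij}$ is non-constant.

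Next, for each connected component $V_0$ of $V \times_{\mathbb{A}^1_W} V$ (which is smooth and irreducible because $V \to \mathbb{A}^1_W$ is étale), $V_0$ lies in some $V_i \times_{\mathbb{A}^1_W} V_j$, and both morphisms $p_1^*(\phi)|_{V_0}, p_2^*(\phi)|_{V_0}: V_0 \to X$ factor set-theoretically through $C_{ij}$. Using the normality and irreducibility of $V_0$ together with the universal property of the normalization $\tilde{C}_{ij} \to C_{ij}$, one obtains lifts $\tilde{p}_1, \tilde{p}_2: V_0 \to \mathbb{A}^1_k = \tilde{C}_{ij}$ (freely picking a preimage when the morphism is constant, and using normalization when dominant). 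The assignment
$$(v, t) \longmapsto \gamma_{ij}\bigl((1-t)\tilde{p}_1(v) + t\tilde{p}_2(v)\bigr)$$
is then an explicit $\mathbb{A}^1$-homotopy in $X(V_0 \times \mathbb{A}^1_k)$ between $p_1^*(\phi)|_{V_0}$ and $p_2^*(\phi)|_{V_0}$, yielding their equality in $\mathcal{S}(X)(V_0)$.

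Assembling these equalities across the (Zariski-disjoint) connected components of $V \times_{\mathbb{A}^1_W} V$, one concludes that $p_1^*(\phi) = p_2^*(\phi)$ in $\mathcal{S}(X)(V \times_{\mathbb{A}^1_W} V)$; pushing forward via the canonical morphism $\mathcal{S}(X) \to \mathcal{S}^{n-1}(X)$, the same equality holds in $\mathcal{S}^{n-1}(X)(V \times_{\mathbb{A}^1_W} V)$. Applying Remark \ref{useful remark} with $\mathcal{F} = \mathcal{S}^{n-1}(X)$ lifts the ghost homotopy $H \in \mathcal{S}(\mathcal{S}^{n-1}(X))(\mathbb{A}^1_W) = \mathcal{S}^n(X)(\mathbb{A}^1_W)$ to a genuine $\mathbb{A}^1$-homotopy $\tilde{H} \in \mathcal{S}^{n-1}(X)(\mathbb{A}^1_W)$, which forces $H(0) = \tilde{H}(0) = \tilde{H}(1) = H(1)$ in $\mathcal{S}^n(X)(W)$ and contradicts the non-constancy of $H$. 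The main obstacle is the second step: producing the lifts $\tilde{p}_1, \tilde{p}_2$ through the normalization relies essentially on the affineness of $X$ (so that $C_{ij}$ and $\tilde{C}_{ij}$ are both affine), the algebraic closedness of $k$ (to force $\tilde{C}_{ij} \cong \mathbb{A}^1_k$), and the smoothness of the fiber product (to ensure $V_0$ is normal and irreducible); the boundary case $n = 1$ additionally requires promoting the constructed $\mathbb{A}^1$-homotopy to an actual equality in $X(V')$ via a further Nisnevich refinement.
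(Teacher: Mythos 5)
Your overall strategy differs from the paper's. You trivialize the descent datum on $V \times_{\mathbb{A}^1_W} V$ (showing $p_1^*\phi$ and $p_2^*\phi$ are naively $\mathbb{A}^1$-homotopic via the normalization $\tilde{C}_{ij} \cong \mathbb{A}^1_k$, hence equal in $\mathcal{S}(X)$), then descend to a genuine section $\tilde{H} \in \mathcal{S}^{n-1}(X)(\mathbb{A}^1_W)$ and conclude $H(0)=H(1)$ in $\mathcal{S}^n(X)(W)$. The paper instead works directly with the $0$- and $1$-sections: it reduces to a single curve $\mathbf{L}$ containing every $\overline{\phi(V_i)}$, factors $\phi\circ\theta_0$ and $\phi\circ\theta_1$ through the normalization $\mathbb{A}^1_k \to \mathbf{L}$, and uses the triviality of $\mathcal{S}^n(\mathbb{A}^1_k)$ to show $H(0)=H(1)=\gamma(0)$. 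Your route is clean and correct for $n \geq 2$ (modulo a small slip: the homotopy should be $\nu\bigl((1-t)\tilde{p}_1(v)+t\tilde{p}_2(v)\bigr)$ with $\nu$ the normalization map $\tilde{C}_{ij}\to X$, not $\gamma_{ij}$, since $\gamma_{ij}$ need not be birational onto its image), and the structural first step agrees with the paper's Cases 1 and 2.

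The genuine gap is the case $n=1$, which you flag but do not resolve, and which is exactly the case the paper needs: in Proposition \ref{connected implies} the lemma is applied to $G \in \mathcal{S}(X)(\mathbb{A}^1_W)$. For $n=1$ you have $\mathcal{F}=\mathcal{S}^0(X)=X$, and lifting the ghost homotopy requires the honest equality $p_1^*\phi = p_2^*\phi$ in $X(V\times_{\mathbb{A}^1_W}V)$; a naive $\mathbb{A}^1$-homotopy between two distinct morphisms to $X$ cannot be promoted to an equality by any Nisnevich refinement, since restriction along a covering is injective for the representable sheaf $X$. Equivalently, even if you descend $\eta_1\circ\phi$ to $\tilde{H}\in\mathcal{S}^1(X)(\mathbb{A}^1_W)$, the conclusion $\tilde{H}(0)=\tilde{H}(1)$ would have to hold in $\mathcal{S}^1(X)(W)$ itself, and a section of $\mathcal{S}^1(X)$ over $\mathbb{A}^1_W$ only identifies its endpoints one level up, in $\mathcal{S}^2(X)(W)$ --- this is precisely why the tower $\mathcal{S}^n$ does not stabilize in general. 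To close the gap you must, as the paper does, connect the sections $\phi\circ\theta_0$ and $\phi\circ\theta_1$ themselves by explicit naive homotopies over a cover of $W$ (both are homotopic to the constant $\gamma(0)$ because they factor through $\mathbf{L}$ and hence through $\mathbb{A}^1_k$); the homotopies you build live over components of $V\times_{\mathbb{A}^1_W}V$, which lie over the diagonal directions of the cover and never relate the fibers over $0$ and $1$ of $\mathbb{A}^1_W$.
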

\begin{proof}
Consider the following commutative diagram:
 $$\begin{tikzcd}
 V \ar[r, "\phi"] \ar[d, "f"] &X \ar[d]\\
\mathbb{A}^1_{W} \ar[r,"H"] 
&\mathcal{S}^n(X)
\end{tikzcd}$$ 
 Suppose $V = \coprod_{i=1}^m V_i$, $V_i$-s are the irreducible components of $V$.

If possible, for every $i, j$ there is $\gamma_{i,j}: \mathbb{A}^1_k \to X$ such that its image (note that the image $\gamma_{i,j}(\mathbb{A}^1_k)$ is closed in $X$, since $X$ is affine and we denote $Im(\gamma_{i,j})$ by $L_{i, j}$) contains both $\overline{\phi(V_i)}$ and $\overline{\phi(V_j)}$. Here are two cases. 

\

\textbf{Case 1: }Suppose each $\overline{\phi(V_i)}$ is of dimension zero. Then $\overline{\phi(V_i)}$ is a singleton set for every $i$, since $V_i$ is irreducible. Suppose, $\overline{\phi(V_i)} = \{\alpha_i\} \subset X(k)$. Then $\alpha_i= \alpha_j \in \mathcal{S}^n(X)(Spec \ k)$ for every $i, j$. Indeed, $\alpha_i, \alpha_j \in L_{i, j}$ and $\gamma_{i,j}(t) = \gamma_{i,j}(0) \in \mathcal{S}^n(X)(Spec \ k)$, for every $t \in \mathbb{A}^1_k$. To prove this, consider the naive $\mathbb{A}^1$-homotopy 
$$\Tilde{\gamma_{i,j}}: \mathbb{A}^1_k \to X \text{ defined as } s \mapsto \gamma(st).$$
Thus $H|_V = \alpha_i \in \mathcal{S}^n(X)(V)$ for all $i$ (a $k$-point $\alpha$ is considered as an element of $\mathcal{S}^n(X)(V)$ as follows
$$V \to Spec \ k \xrightarrow{\alpha} X \to \mathcal{S}^n(X))$$ 
and hence $H = \alpha_i \ \forall i$. Therefore $H(0) = H(1) \in \mathcal{S}^n(X)(W)$, which is a contradiction since $H$ is a non-constant homotopy. 

\

\textbf{Case 2: }
Suppose, there is some $t$ such that $dim(\overline{\phi(V_t)}) \geq 1$. Then $\overline{\phi(V_t)} \subset L_{t, i}$ for every $i$. Since, $L_{t,i}$ is the image of $\mathbb{A}^1$, so $L_{t,i} = \overline{\phi(V_t)}, \ \forall i$. Thus $\overline{\phi(V_t)}$, which is the image of an $\mathbb{A}^1$ in $X$, contains all the $\overline{\phi(V_i)}$'s.
So there is an $\mathbb{A}^1$, say $\gamma: \mathbb{A}^1_k \to X$ such that the image $\gamma(\mathbb{A}^1_k)$ contains $\overline{\phi(V_i)}$ for every $i$ (we denote the image $\gamma(\mathbb{A}^1_k)$ by $\mathbf{L}$, which is closed in $X$). We will prove that in this situation $H$ is not a non-constant homotopy i.e. $H(0) = H(1) \in \mathcal{S}^n(X)(W)$. The $0$-section $H(0)$ is given by the following commutative diagram:
 $$\begin{tikzcd}
U_0 \ar[r, "\theta_0"] \ar[d, "f_0" left] &V \ar[r, "\phi"] \ar[d, "f"] &X \ar[d, "\eta" right]\\
W \ar[r,"i_0" below] &\mathbb{A}^1_{W} \ar[r, "H" below] &\mathcal{S}^n(X)
\end{tikzcd}$$ 
Here $i_0$ is the $0$-section and the left most square is a pullback square. Similarly the $1$-section $H(1)$ is given by the following commutative diagram:
$$\begin{tikzcd}
U_1 \ar[r, "\theta_1"] \ar[d, "f_1" left] &V \ar[r, "\phi"] \ar[d, "f"] &X \ar[d, "\eta" right]\\
W \ar[r,"i_1" below] &\mathbb{A}^1_{W} \ar[r, "H" below] &\mathcal{S}^n(X)
\end{tikzcd}$$
Here $i_1$ is the $1$-section and the left most square is a pullback square. Here are two possible cases regarding the closures of the images of $\phi \circ \theta_0$ and $\phi \circ \theta_1$. \par
 \textbf{Subcase 1:} Assume that both the closures of $Im(\phi \circ \theta_0)$ and $Im(\phi \circ \theta_1)$ are $\mathbf{L}$. In this case, both $\phi \circ \theta_0$ and $\phi \circ \theta_1$ factor through the normalisation of $\mathbf{L}$ as $U_0 \xrightarrow{\Tilde{\theta_0}} \mathbb{A}^1_k \to X$ and $U_1 \to \mathbb{A}^1_k \to X$ respectively. Therefore we have the following commutative diagram for $\phi \circ \theta_0$:
$$\begin{tikzcd}
\mathcal{S}^n(U) \ar[r] &\mathcal{S}^n(\mathbb{A}^1_k) \ar[r, "\mathcal{S}^n(\gamma)"] &\mathcal{S}^n(X) \\
&\mathbb{A}^1_k \ar[dr, "\gamma"]  \ar[u, "\eta^{\prime}"]\\
U_0 \ar[r, "\theta_0"]  \ar[ur, "\tilde{\theta_0}"] \ar[uu] \ar[d, "f_0" left] &V \ar[r, "\phi"] \ar[d, "f"] &X \ar[d, "\eta" right] \ar[uu, "\eta" right] \\
W \ar[r,"i_0" below] &\mathbb{A}^1_{W} \ar[r, "H" below] &\mathcal{S}^n(X)
\end{tikzcd}$$ 
From the above diagram we have,
\begin{align*}
H \circ i_0 \circ f_0 &= H \circ f \circ \theta_0 \\
                             & = \eta \circ \phi \circ \theta_0 \\
                            & = \eta \circ \gamma \circ \tilde{\theta_0} \\
                            & = \mathcal{S}^n(\gamma) \circ \eta^{\prime} \circ \tilde{\theta_0}
\end{align*}
Since $\mathcal{S}^n(\mathbb{A}^1_k)$ is the trivial sheaf $Spec \ k$, therefore, $H(0)|_{U_0}$ is same as the composition of maps $U_0 \to \Spec \ k \xrightarrow{\alpha} \mathcal{S}^n(X)$ for some $\alpha \in X(k)$. Now, $\gamma = \gamma(0) \in \mathcal{S}(X)(\mathbb{A}^1_k)$. Indeed, a naive $\mathbb{A}^1$-homotopy between $\gamma$ and $\gamma(0)$ is given by 
$$\tilde{\gamma}: \mathbb{A}^1_k \times_k \mathbb{A}^1_k \to X \text{ defined as } (s,t) \mapsto \gamma(st).$$ 
Thus $\alpha = \gamma(0) \in \mathcal{S}^n(X)(Spec \ k)$ and $H(0)|_{U_0} = \gamma(0)$.
Similarly, we have the same kind diagram for $\phi \circ \theta_1$ and $H(1)|_{U_1} = \gamma(0)$. 
Thus both $H(0) = H(1) = \gamma(0)$ and this contradicts the fact that $H$ is a non-constant homotopy. \par
\textbf{Subcase 2:} Assume that one of the closures (say $\overline{Im(\phi \circ \theta_0)}$) consists finitely many points in $\mathbf{L}$. Suppose, $\overline{Im(\phi \circ \theta_0)} = \{\alpha_1, .., \alpha_m\} \subset X(k)$ where $\alpha_i = \gamma(t_i)$, for some $t_i \in \mathbb{A}^1_k$ . So each irreducible component of $U_0$ maps to some $\alpha_i$ under $\phi \circ \theta_0$. Thus for each irreducible component (say $U_0^{\prime}$) of $U_0$, $\phi \circ \theta_0|_{U_0^{\prime}}$ factors as $U_0^{\prime} \xrightarrow{t_i} \mathbb{A}^1_k \xrightarrow{\alpha_i} X$.
Hence in this case also $H(0) = \gamma(0)$, since $\gamma = \gamma(t_0) \in \mathcal{S}^n(X)(\mathbb{A}^1_k)$, for every $t_0 \in \mathbb{A}^1_k$.  Indeed, a naive $\mathbb{A}^1$-homotopy between $\gamma$ and $\gamma(t_0)$ is given by 
$$\tilde{\gamma}: \mathbb{A}^1_k \times_k \mathbb{A}^1_k \to X \text{ defined as } (s,t) \mapsto \gamma((st+(1-s)t_0)).$$ 
Similarly, $H(1)= \gamma(0)$. 
It is a contradiction to the fact that $H$ is non-constant. \\
Hence the Lemma follows. 
\end{proof}

From now on by ``two $\mathbb{A}^1$'s (given by $\gamma_1, \gamma_2: \mathbb{A}^1_k \to X$) in $X$ intersect", we mean that $Im(\gamma_1) \cap Im(\gamma_2) \neq \emptyset$.
\begin{proposition} \label{connected implies}
Let $X \in Sm/k$ be an affine surface such that $\pi_0^{\mathbb{A}^1}(X)(Spec \ k)$ is trivial. Then one of the following holds:
\begin{enumerate}
\item There exists some $Y \in Sm/k$ such that $Y$ is irreducible along with a non-constant homotopy $H:\mathbb{A}^1_k \times Y \to X$ which is dominant.
\item $\forall x \in X(k)$ there is an $\mathbb{A}^1$ in $X$ and there are $k$-points $\alpha, \ \beta$ in $X$ and two distinct $\mathbb{A}^1$-s (the images are distinct) in $X$ through $\alpha$ and $\beta$ respectively such that they intersect.
\end{enumerate}
\end{proposition}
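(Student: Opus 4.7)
The plan is to assume case (1) fails and establish (2) in two steps, using Lemma \ref{non-constant homotopy} together with a pointed variant of Proposition \ref{induction separable}.

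To prove the first clause of (2), I would fix $x \in X(k)$ and, since $X$ is a positive-dimensional variety over the algebraically closed field $k$, choose $y \in X(k)$ with $y \neq x$. The hypothesis $\pi_0^{\mathbb{A}^1}(X)(\Spec k) = \bullet$ gives $x = y$ in $\mathcal{L}(X)(\Spec k)$; take the least $n \geq 0$ with $x = y$ in $\mathcal{S}^{n+1}(X)(\Spec k)$. By Remark \ref{useful remark} this yields a chain of non-constant homotopies $H_1, \dots, H_m \in \mathcal{S}^n(X)(\mathbb{A}^1_k)$ with $H_1(0) = x$. For $n = 0$, the link $H_1$ is itself a non-constant morphism $\mathbb{A}^1_k \to X$ with $H_1(0) = x$, giving an $\mathbb{A}^1$ through $x$. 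For $n \geq 1$, I would lift $H_1$ to $\phi : V \to X$ on a Nisnevich cover $V \to \mathbb{A}^1_k$ and use that any component $V_a$ meeting the fiber of $V$ over $0 \in \mathbb{A}^1_k$ has $x \in \overline{\phi(V_a)}$. A pointed variant of Proposition \ref{induction separable}---where $V_a$ replaces the ``dominant'' component of the original proof---then reduces to a non-constant homotopy at level $n - 1$ whose $0$-section still passes through $x$. Iterating down to level $0$ gives a non-constant family of $\mathbb{A}^1$'s in $X$ whose $0$-section passes through $x$; non-uniruledness forces its image to lie in an irreducible closed curve $C \subset X$ with $x \in C$, and non-constancy forces $C$ to be rational affine, so the normalisation $\mathbb{A}^1 \to C \hookrightarrow X$ is the desired $\mathbb{A}^1$ through $x$.

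To prove the second clause of (2), I would apply Lemma \ref{non-constant homotopy} to a top-level non-constant homotopy $H \in \mathcal{S}^n(X)(\mathbb{A}^1_k)$ with $n \geq 1$ (which we get from some pair $x \neq y$ whose equality in $\mathcal{L}(X)(\Spec k)$ is witnessed only above level $0$), producing components $V_0, V_0' \subset V$ whose image closures $\overline{\phi(V_0)}$ and $\overline{\phi(V_0')}$ do not lie in a common $\mathbb{A}^1$-image in $X$. By non-uniruledness combined with the argument above, each closure is a rational affine curve, hence the image of an $\mathbb{A}^1_k \to X$ passing through a $k$-point, and the two curves are distinct. Their intersection is forced to be non-empty by the chain $G_1, \dots, G_t \in \mathcal{S}^{n-1}(X)(\mathbb{A}^1_{V'})$ on the Nisnevich cover $V' \to V \times_{\mathbb{A}^1_k} V$ linking $p_1^*(\phi)|_{V'}$ and $p_2^*(\phi)|_{V'}$; chasing this chain and iterating the pointed reduction inside each of the two curves yields a shared $k$-point, and the two distinct $\mathbb{A}^1$'s passing through corresponding $k$-points $\alpha \in \overline{\phi(V_0)}$, $\beta \in \overline{\phi(V_0')}$ realise the required intersection configuration.

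The main obstacle is the pointed variant of Proposition \ref{induction separable}: the original proof propagates dominance of the $0$-section through the reduction, and we must replace that with the condition ``the $0$-section passes through $x$.'' In particular, the analogue of the ``Subcase 2'' contradiction step, where all $\overline{\phi(V_i)}$ coincide, requires using non-uniruledness of $X$ to force that common closure to be rational affine and hence to supply the desired $\mathbb{A}^1$ through $x$ directly. A parallel delicate point is verifying non-emptiness of the intersection of the two curves in the second clause, which needs the ghost-homotopy chain to be exploited more carefully than in the purely dominant setting of Theorem \ref{A1-connected dominant}.
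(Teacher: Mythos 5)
Your overall strategy (negate (1), invoke Lemma \ref{non-constant homotopy}, and use non-uniruledness to force image closures into rational curves) points in the right direction, but there are two genuine gaps, and in both places the paper's proof uses a different and essential mechanism that your proposal does not supply.

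First, for the clause ``through every $k$-point there is an $\mathbb{A}^1$'', you propose a pointed variant of Proposition \ref{induction separable} and correctly identify it as the main obstacle --- but you do not resolve it, and the difficulty is real: when you descend one level, the component $V_0'$ of the cover on which the new homotopy is non-constant need not be a component whose image closure contains the chosen point $x$, so non-constancy and ``passing through $x$'' can fail to be simultaneously achievable on any single component. The paper sidesteps this entirely by arguing contrapositively: if there exists \emph{some} $\alpha \in X(k)$ with no $\mathbb{A}^1$ through it, then the proof of \cite[Theorem 4.9]{cb} (which contains exactly the pointed descent, via \cite[Proposition 4.7]{cb}) already produces a dominant non-constant homotopy, i.e.\ case (1) holds. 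So this clause is dispatched by citation, not re-derived.

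Second, and more seriously, your argument for the existence of two distinct intersecting $\mathbb{A}^1$'s is asserted rather than proved. You claim that ``chasing the chain $G_1,\dots,G_t$ yields a shared $k$-point'' of the two curves $\overline{\phi(V_0)}$ and $\overline{\phi(V_0')}$ supplied by Lemma \ref{non-constant homotopy}; but those two curves need not intersect each other at all --- the intersecting pair, if it exists, may occur between intermediate links of the chain, not between the two endpoint curves. The paper's actual mechanism is a proof by contradiction that you are missing: assume simultaneously that there is no dominant homotopy \emph{and} that no two distinct $\mathbb{A}^1$'s in $X$ intersect (so through each point the $\mathbb{A}^1$ is unique). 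After descending to a level-$1$ homotopy via Lemma \ref{induction} (the unpointed lemma --- dominance plays no role here), each non-constant link $G_l|_{\mathbb{A}^1_{V_0}}$ that is not dominant has irreducible one-dimensional image closure equal to the image of a single $\mathbb{A}^1$; the endpoint identities $G_l(1)=G_{l+1}(0)$ together with the no-intersection hypothesis then force all these $\mathbb{A}^1$'s to coincide, so a single $\mathbb{A}^1$ contains both $\overline{\phi(V_i)}$ and $\overline{\phi(V_j)}$, contradicting Lemma \ref{non-constant homotopy} (this is the paper's Case 3; Cases 1 and 2 handle the configurations where some $\overline{\phi(V_r)}$ is itself too large to fit in one $\mathbb{A}^1$, yielding dominance directly). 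Without assuming the negation of the intersection statement you have no lever to collapse the chain, and a direct construction of the intersection point is not available by the route you describe.
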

\begin{proof} 
Suppose, $X \in Sm/k$ is an affine surface with $\pi_0^{\mathbb{A}^1}(X)(Spec \ k)$ is trivial. If there is some $\alpha \in X(k)$ such that there is no non-constant $\mathbb{A}^1$ through $\alpha$, according to the proof in \cite[Theorem 4.9]{cb}, there is a non-constant homotopy $H: \mathbb{A}^1_k \times Y \to X$ which is dominant for some $Y \in Sm/k$ such that $Y$ is irreducible. If possible, the conclusion of this Proposition is false for $X$. Thus we assume that for each $x \in X(k)$ there is the unique $\mathbb{A}^1$ through $x$ (this means given two $\mathbb{A}^1$'s $\gamma_1, \gamma_2$ through $x$, we have $Im(\gamma_1) = Im(\gamma_2)$) i.e. there are no intersecting $\mathbb{A}^1$'s in $X$ and $X$ does not admit such a dominant map as described in the conclusion of the Proposition. Fix $\alpha, \beta \in X(k)$ such that $\beta$ lies outside the unique $\mathbb{A}^1$ through $\alpha$. Since $X$ is $\mathbb{A}^1$-connected, $\mathcal{L}(X)$ is trivial \cite[Corollary 2.18]{bhs}. Therefore there is an $n \geq 1$ such that $\alpha = \beta \in \mathcal{S}^{n+1}(X)(Spec \ k)$, but $\alpha \neq \beta \in \mathcal{S}^n(X)(Spec \ k)$. Thus there is a chain of non-constant $\mathbb{A}^1$-homotopies (Remark \ref{useful remark}) $H_1, H_2, \dots, H_p \in \mathcal{S}^n(X)(\mathbb{A}^1_k)$ such that $H_1(0) = \alpha$ and $H_p(1) = \beta$. By applying Lemma \ref{induction} repeatedly, there is a non-constant homotopy $G \in \mathcal{S}(X)(\mathbb{A}^1_W)$ for some $W \in Sm/k$ such that $W$ is irreducible. Since the morphism $X \to \mathcal{S}(X)$ is an epimorphism, there is a Nisnevich covering $f: V \to \mathbb{A}^1_W$ and a morphism $\phi: V \to X$ such that the following diagram commutes:
 $$\begin{tikzcd}
 V \ar[r, "\phi"] \ar[d, "f"] &X \ar[d]\\
\mathbb{A}^1_{W} \ar[r,"G"] 
&\mathcal{S}(X)
\end{tikzcd}$$
There is a Nisnevich covering $V^{\prime} \to V \times_{\mathbb{A}^1_W} V$ such that $\phi \circ p_1|_{V^{\prime}}$ and $\phi \circ p_2|_{V^{\prime}}$ are $\mathbb{A}^1$-chain homotopic (where $p_1, p_2: V \times_{\mathbb{A}^1_W} V \to V$ are the projections). Thus there is a chain of non-constant $\mathbb{A}^1$-homotopies (Remark \ref{useful remark}) $G_1, G_2, \dots, G_m : \mathbb{A}^1_{V^{\prime}} \to X$ such that $G_1(0) = \phi \circ p_1|_{V^{\prime}}$ and $G_m(1) = \phi \circ p_2|_{V^{\prime}}$. \par
      Suppose $V = \coprod_i V_i$, $V_i$-s are the irreducible components of $V$. 
Then each irreducible component of $V^{\prime}$ (which is also a connected component) maps to $V_i \times_{\mathbb{A}^1_W} V_j$ for some $i, j$ (note that, each $V_i \times_{\mathbb{A}^1_W} V_j$ is non-empty since $W$ is irreducible). If an irreducible component (say $V_0$) of $V^{\prime}$ maps to $V_q \times_{\mathbb{A}^1_W} V_s$ such that $\overline{\phi(V_q)}$ and $\overline{\phi(V_s)}$ are distinct, there is some $t$ such that $G_t|_{\mathbb{A}^1_{V_0}}$ is a non-constant homotopy and $G_t|_{\mathbb{A}^1_{V_0}}(0) = \phi \circ p_1|_{V_0}$, since we have $G_1|_{\mathbb{A}^1_{V_0}}(0) = \phi \circ p_1|_{V_0}$ and $G_m|_{\mathbb{A}^1_{V_0}}(1) = \phi \circ p_2|_{V_0}$ and $\overline{Im(\phi \circ p_1|_{V_0})} = \overline{\phi(V_q)}$ and $\overline{Im(\phi \circ p_2|_{V_0})} = \overline{\phi(V_s)}$. As $k$ is algebraically closed, there is a non-constant $\mathbb{A}^1$ contained in $Im(G_t|_{\mathbb{A}^1_{V_0}})$. Since $G$ is a non-constant homotopy, there is some $i$ and $j$ such that there is no $\mathbb{A}^1$ in $X$ that contains both $\overline{\phi(V_i)}$ and $\overline{\phi(V_j)}$ by Lemma \ref{non-constant homotopy}. The rest of the proof follows from the following cases. \par

\

\begin{figure} [h!]
  \includegraphics[width=0.4\linewidth]{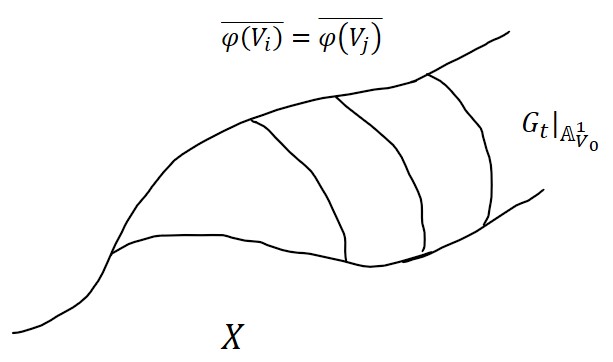}
  \caption*{Case 1}
\end{figure}

\textbf{Case 1:} Assume that $\overline{\phi(V_i)}$ and $\overline{\phi(V_j)}$ are equal and there is some $r$ such that $\overline{\phi(V_r)}$ and $\overline{\phi(V_i)}$ are distinct. There is some irreducible component of $V^{\prime}$ (say $V_0$) that maps to $V_i \times_{\mathbb{A}^1_W} V_r$. There is some $t$ such that $G_t|_{\mathbb{A}^1_{V_0}}$ is non-constant and $G_t|_{\mathbb{A}^1_{V_0}}(0) = \phi \circ p_1 |_{V_0}$. Here $\overline{Im(\phi \circ p_1|_{V_0})}$ is same with $\overline{\phi(V_i)}$. Since $G_t|_{\mathbb{A}^1_{V_0}}$ is non-constant and $k$ is algebraically closed, $Im(G_t|_{\mathbb{A}^1_{V_0}})$ contains a non-constant $\mathbb{A}^1$ and $\overline{Im(G_t|_{\mathbb{A}^1_{V_0}})}$ contains $\overline{\phi(V_i)}$. Therefore $G_t|_{\mathbb{A}^1_{V_0}}$ is  dominant, since $\overline{\phi(V_i)}$ is not contained in a single $\mathbb{A}^1$. This is a contradiction since we have assumed that $X$ does not admit such a dominant map. \par

\

\begin{figure} [h!]
  \includegraphics[width=0.4\linewidth]{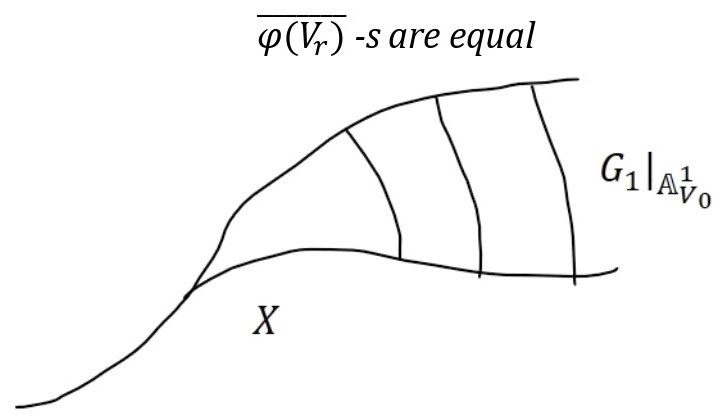}
  \caption*{Case 2}
\end{figure}

\textbf{Case 2:} Assume that for every $r$, $\overline{\phi(V_r)}$ are same. Choose an irreducible component of $V^{\prime}$ (say $V_0$) such that $G_1|_{\mathbb{A}^1_{V_0}}$ is non-constant. Suppose $V_0$ maps to $V_l \times_{\mathbb{A}^1_W} V_s$ for some $l, s$. Here $G_1(0) = \phi \circ p_1 |_{V^{\prime}}$ and $\overline{Im(\phi \circ p_1|_{V_0})}$ is same with $\overline{\phi(V_l)}$. Since $G_1|_{\mathbb{A}^1_{V_0}}$ is non-constant and $k$ is algebraically closed, $Im(G_1|_{\mathbb{A}^1_{V_0}})$ contains a non-constant $\mathbb{A}^1$ and $\overline{Im(G_1|_{\mathbb{A}^1_{V_0}})}$ contains $\overline{\phi(V_l)}$. Therefore $G_1|_{\mathbb{A}^1_{V_0}}$ is  dominant, since $\overline{\phi(V_l)}$ is not contained in a single $\mathbb{A}^1$. It is a contradiction. \par

\

\textbf{Case 3: } Assume that $\overline{\phi(V_i)}$ and $\overline{\phi(V_j)}$ are distinct. There is an irreducible component (say $V_0$) of $V^{\prime}$ that maps to $V_i \times_{\mathbb{A}^1_W} V_j$. Then there is some $t$ such that $\overline{Im(G_t|_{\mathbb{A}^1_{V_0}}(0))}$ and $\overline{Im(G_t|_{\mathbb{A}^1_{V_0}}(1))}$ are distinct and there is no single $\mathbb{A}^1$ in $X$ that contains both $\overline{Im(G_t|_{\mathbb{A}^1_{V_0}}(0))}$ and $\overline{Im(G_t|_{\mathbb{A}^1_{V_0}}(1))}$. 
Indeed if possible, assume that for each $l$ either $\overline{Im(G_l|_{\mathbb{A}^1_{V_0}}(0))}$ and $\overline{Im(G_l|_{\mathbb{A}^1_{V_0}}(1))}$ are same or if $\overline{Im(G_l|_{\mathbb{A}^1_{V_0}}(0))}$ and $\overline{Im(G_l|_{\mathbb{A}^1_{V_0}}(1))}$ are distinct, then there is an $\mathbb{A}^1$ in $X$ that contains both the closures. Thus for each homotopy $G_l|_{\mathbb{A}^1_{V_0}}$, there is an $\mathbb{A}^1$ in $X$ that contains both $\overline{Im(G_l|_{\mathbb{A}^1_{V_0}}(0))}$ and $\overline{Im(G_l|_{\mathbb{A}^1_{V_0}}(1))}$, as $G_l(1) = G_{l+1}(0)$. Since there are no intersecting $\mathbb{A}^1$-s in $X$ and $G_l(1) = G_{l+1}(0)$, there is a single $\mathbb{A}^1$ in $X$ that contains all $\overline{Im(G_l|_{\mathbb{A}^1_{V_0}}(0))}$ and $\overline{Im(G_l|_{\mathbb{A}^1_{V_0}}(1))}$ for any $l$. Thus there is an $\mathbb{A}^1$ in $X$ that contains both $\overline{\phi(V_i)}$ and $\overline{\phi(V_j)}$. It is a contradiction.
Since $k$ is algebraically closed, $Im(G_t|_{\mathbb{A}^1_{V_0}})$ contains a non-constant $\mathbb{A}^1$ and $\overline{Im(G_t|_{\mathbb{A}^1_{V_0}}(0))}$ and $\overline{Im(G_t|_{\mathbb{A}^1_{V_0}}(1))}$ are not contained in that $\mathbb{A}^1$. Therefore, $G_t|_{\mathbb{A}^1_{V_0}}$ is dominant. It is a contradiction. 

\begin{figure} [h!]
  \includegraphics[width=0.4\linewidth]{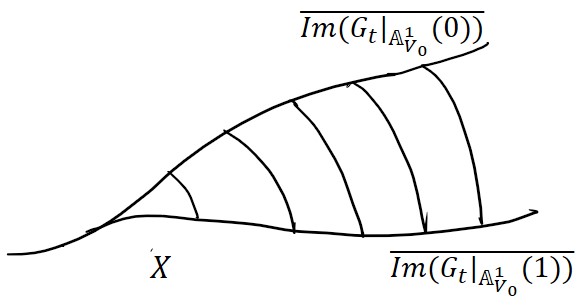}
  \caption*{Case 3}
\end{figure}

Hence the Proposition follows.
\end{proof}

Recall that a simplicial set $X$ is called Kan fibrant \cite[Definition 1.3]{may} if for every $n, \ l$, $0 \leq l \leq n$ 
given $n$-many $(n-1)$ simplices $x_0,.., x_{l-1}, x_{l+1},.., x_n$ of $X$ satisfying the compatibility condition $d_i x_j = d_{j-1} x_i$ for $i < j, i, j \neq l$, there is an $n$-simplex $x$ such that $d_i x  = x_i$ for all $i \neq l$.  
\begin{definition}
A simplicial set $X$ is called Kan fibrant in degree $n$ if for each $l$-th horn $\Lambda^n_l$ where $0 \leq l \leq n$, a map $\phi: \Lambda^n_l \to X$ can be extended to $\Tilde{\phi}: \Delta^n \to X$. 
\end{definition}
\begin{example}
Any Kan fibrant simplicial set is Kan fibrant in degree $n$ for any $n$. The retract of a Kan fibrant simplicial set is Kan fibrant. Any simplicial group is Kan fibrant \cite[Theorem 17.1]{may}. So $Sing_*(\mathbb{A}^m_k)$ \cite[Section 2.3]{mv} is sectionwise Kan fibrant.

Thus in particular if we are given two morphisms $f, g : \mathbb{A}^1_k \to \mathbb{A}^m_k$ such that $f(1) = g(0)$, then the morphism
$h: \mathbb{A}^2_k \to \mathbb{A}^m_k$ defined as
$$(t_1, t_2) \mapsto (f(1-t_1)+g(t_2) - f(1))$$
satisfies $h(1-x, 0) = f(x)$ and $h(0, x) = g(x)$. So $Sing_*(\mathbb{A}^m_k)(Spec \ k)$ is Kan fibrant in degree $2$. Therefore, if there are two intersecting $\mathbb{A}^1$'s in $\mathbb{A}^2_k$, then we can extend it to get a morphism from $\mathbb{A}^2_k$ to $\mathbb{A}^2_k$.
\end{example}

\begin{corollary} \label{connected Kodaira dimension 2}
Let $X \in Sm/k$ be an affine surface, where $k$ is an algebraically closed field of characteristic zero. Suppose that $\pi_0^{\mathbb{A}^1}(X)(Spec \ k)$ is trivial and $Sing_*(X)(Spec \ k)$ is Kan fibrant in degree $2$. Then $X$ has negative logarithmic Kodaira dimension.
\end{corollary}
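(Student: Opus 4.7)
The plan is to apply Proposition \ref{connected implies} to $X$ and, in each of its two alternatives, produce a dominant morphism onto $X$ from a smooth variety of log Kodaira dimension $-\infty$; then \cite[Proposition 11.4]{iitaka} will force $\bar{\kappa}(X) = -\infty$, exactly as in Corollary \ref{connected Kodaira dimension 1}.

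In alternative (1) of Proposition \ref{connected implies}, one is handed a dominant non-constant homotopy $H : \mathbb{A}^1_k \times_k Y \to X$ with $Y$ irreducible, and the argument from Corollary \ref{connected Kodaira dimension 1} applies verbatim: $\mathbb{A}^1_k \times_k Y$ has $\bar{\kappa} = -\infty$ by \cite[Theorem 11.3]{iitaka}, so $\bar{\kappa}(X) = -\infty$ by \cite[Proposition 11.4]{iitaka}.

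The substantive case is (2), in which one is given two non-constant $\gamma_1, \gamma_2 : \mathbb{A}^1_k \to X$ with distinct images sharing a point $p$. Since $k$ is algebraically closed, I will choose $k$-rational preimages of $p$ under $\gamma_1, \gamma_2$ and reparametrize by affine automorphisms of $\mathbb{A}^1_k$ to arrange $\gamma_1(1) = \gamma_2(0) = p$. This matches the configuration of the example preceding the corollary, so the Kan fibrancy of $Sing_*(X)(Spec\ k)$ in degree $2$ produces a filler $h : \mathbb{A}^2_k \to X$ whose restrictions to the two coordinate axes recover $\gamma_1$ and $\gamma_2$.

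The final step is to check that $h$ is dominant and then re-invoke Iitaka. The closure $\overline{h(\mathbb{A}^2_k)}$ is irreducible and contains the two distinct irreducible curves $\overline{\gamma_1(\mathbb{A}^1_k)}$ and $\overline{\gamma_2(\mathbb{A}^1_k)}$, forcing its dimension to be at least $2$; as $X$ is an irreducible surface, $h$ is dominant, and \cite[Proposition 11.4]{iitaka} together with $\bar{\kappa}(\mathbb{A}^2_k) = -\infty$ then gives $\bar{\kappa}(X) = -\infty$. The main obstacle, I anticipate, is the construction of $h$ in case (2): correctly massaging the two intersecting $\mathbb{A}^1$'s into an admissible horn via the reparametrization at $p$, and then extracting dominance of $h$ from the distinctness of the two curves.
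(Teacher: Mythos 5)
Your proposal is correct and follows the paper's proof essentially verbatim: apply Proposition \ref{connected implies}, handle alternative (1) exactly as in Corollary \ref{connected Kodaira dimension 1}, and in alternative (2) use Kan fibrancy of $Sing_*(X)(Spec \ k)$ in degree $2$ to fill the horn formed by the two intersecting $\mathbb{A}^1$'s into a dominant morphism $\mathbb{A}^2_k \to X$, then invoke \cite[Proposition 11.4]{iitaka}. The only difference is that you spell out the reparametrization and the dominance check (image contains two distinct irreducible curves, hence has dimension $\geq 2$), which the paper leaves implicit.
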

\begin{proof}
Since $\pi_0^{\mathbb{A}^1}(X)(Spec \ k)$ is trivial, so by the Proposition \ref{connected implies} either there is some $Y \in Sm/k$ irreducible along with a non-constant homotopy $H: \mathbb{A}^1_k \times_k Y \to X$ which is dominant or there are two intersecting $\mathbb{A}^1$'s in $X$. For the first case, we proceed as in Corollary \ref{connected Kodaira dimension 1} to conclude that $X$ has negative logarithmic Kodaira dimension. For the second case, since $Sing_*(X)(Spec \ k)$ is Kan fibrant in degree $2$, there is a dominant morphism $\phi: \mathbb{A}^2_k \to X$. Therefore $X$ has logarithmic Kodaira dimension $-\infty$ by \cite[Proposition 11.4]{iitaka}.
\end{proof}

\

\subsection{Characterisation of Affine Surface over a DVR} 
In this subsection we prove that over a discrete valuation ring $R$ of equicharacteristic zero, $\mathbb{A}^1$-contractibilty detects $\mathbb{A}^2_R$ (Theorem \ref{dvr}). However, if $R$ is not of equicharacteristic zero, then it is not true (Remark \ref{nequi}).

\begin{theorem}\label{dvr}
Let $R$ be an equicharacteristic zero discrete valuation ring and $X$ be a smooth affine scheme over $R$ of relative dimension $2$. Then $X$ is $\mathbb{A}^1$-contractible if and only if $X$ is isomorphic to $\mathbb{A}^2_R$.
\end{theorem}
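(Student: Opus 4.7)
The ``if'' direction is immediate, since $\mathbb{A}^2_R \to \Spec R$ is an $\mathbb{A}^1$-weak equivalence. For the ``only if'' direction, the plan is to proceed in two stages: first reduce to showing each fiber is an affine plane over its base field, then globalize via Sathaye's theorem on $\mathbb{A}^2$-fibrations.

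\textbf{Stage 1 (fibers are $\mathbb{A}^2$).} Let $K = \mathrm{Frac}(R)$ and $k = R/\mathfrak{m}$; by the equicharacteristic zero hypothesis, both are fields of characteristic zero. Base change along the closed point $\Spec k \to \Spec R$ and the generic point $\Spec K \to \Spec R$ preserves $\mathbb{A}^1$-weak equivalences of smooth schemes, so the assumed $\mathbb{A}^1$-contractibility of $X \to \Spec R$ yields $\mathbb{A}^1$-weak equivalences $X_k \to \Spec k$ in $\mathbf{H}(k)$ and $X_K \to \Spec K$ in $\mathbf{H}(K)$. Since $X/R$ is smooth of relative dimension $2$, both $X_k$ and $X_K$ are smooth affine surfaces over their respective characteristic zero fields, so \cite[Theorem 1.1]{cb} applies and gives $X_k \cong \mathbb{A}^2_k$ and $X_K \cong \mathbb{A}^2_K$ as varieties.

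\textbf{Stage 2 (Sathaye's theorem).} Write $A := \Gamma(X, \mathcal{O}_X)$; this is a finitely presented flat $R$-algebra. Stage 1 gives $A \otimes_R K \cong K[x,y]$ and $A \otimes_R k \cong k[x,y]$, so $X \to \Spec R$ is an $\mathbb{A}^2$-fibration with integral special fiber. Sathaye's theorem asserts that every such $\mathbb{A}^2$-fibration over a DVR whose residue field has characteristic zero is trivial, which gives $A \cong R[x,y]$, i.e.\ $X \cong \mathbb{A}^2_R$ as $R$-schemes.

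\textbf{Main obstacle.} The deep external input is Sathaye's theorem, which I treat as a black box; it is exactly the globalization tool designed for this setting. The most delicate paper-internal point is to verify that $\mathbb{A}^1$-contractibility over $\Spec R$ cleanly restricts to $\mathbb{A}^1$-contractibility over the residue and fraction fields, which is the standard compatibility of motivic homotopy theory with base change, but deserves an explicit reference since the paper otherwise works exclusively over fields. Once Stage 1 is in place, Stage 2 is an immediate application of classical affine algebraic geometry.
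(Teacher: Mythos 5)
Your proposal is correct and follows essentially the same route as the paper: base change the $\mathbb{A}^1$-contractibility to the fraction field $K$ and residue field $k$ (the paper cites \cite[Corollary 1.24]{mv} for this compatibility), apply \cite[Theorem 1.1]{cb} to identify both fibers with affine planes, and conclude with Sathaye's theorem \cite[Theorem 1]{sathaye}. The only cosmetic difference is that you spell out the $\mathbb{A}^2$-fibration language for the Sathaye step, which the paper leaves implicit.
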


\begin{proof}
Let $K$ and $k$ be the fraction field and residue field of $R$ respectively. The base changes $X_K$ and $X_k$ of $X$ over $K$ and $k$ respectively are $\mathbb{A}^1$-contractible \cite[Corollary 1.24]{mv}. 
Thus $X_K$ and $X_k$ are isomorphic to $\mathbb{A}^2_K$ and $\mathbb{A}^2_k$ respectively by \cite[Theorem 1.1]{cb}. Therefore $X$ is isomorphic to $\mathbb{A}^2_R$ by \cite[Theorem 1]{sathaye}.
\end{proof}

\begin{remark} \label{nequi}
Theorem \ref{dvr} is not true if $R$ is not of equicharacteristic zero. The counterexample of such $X$ appeared in \cite[Theorem 5.1]{asa}. The affine scheme $X$ is $\mathbb{A}^1$-contractible since it is a retract of $\mathbb{A}^3_R$ and the base extensions of $X$ are isomorphic to $\mathbb{A}^2_K$ and $\mathbb{A}^2_k$ over the fraction field $K$ and residue field $k$ respectively. However, $X$ is not isomorphic to $\mathbb{A}^2_R$.
\end{remark}

The characterisation of affine surfaces using $\mathbb{A}^1$-homotopy theory yields the generalised Zariski cancellation:

\begin{corollary}
Suppose $X$ is a smooth affine scheme over $R$ of relative dimension $2$ and $Y$ is a smooth scheme over $R$, where $R$ is a discrete valuation ring of equicharacteristic zero. Suppose $X \times_R Y \cong \mathbb{A}^N_R$. Then $X \cong \mathbb{A}^2_R$.
\end{corollary}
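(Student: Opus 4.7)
The plan is to leverage Theorem \ref{dvr}: since $X$ is given to be a smooth affine $R$-scheme of relative dimension $2$ over an equicharacteristic zero DVR, it suffices to prove that $X$ is $\mathbb{A}^1$-contractible over $R$, and then the corollary follows at once. Thus the entire task is to upgrade the scheme-theoretic hypothesis $X \times_R Y \cong \mathbb{A}^N_R$ into an $\mathbb{A}^1$-weak equivalence $X \to \Spec R$.

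First I would extract an $R$-rational point of $Y$. The origin $0 \in \mathbb{A}^N_R(R)$ corresponds under the given isomorphism $\mathbb{A}^N_R \cong X \times_R Y$ to some pair $(x_0, y_0) \in X(R) \times Y(R)$; in particular $y_0 \in Y(R)$ is non-empty. Using this point I define the section $s := (\mathrm{id}_X, y_0 \circ \pi_X) : X \to X \times_R Y$, where $\pi_X : X \to \Spec R$ is the structure map. The first projection $p_1 : X \times_R Y \to X$ satisfies $p_1 \circ s = \mathrm{id}_X$, which exhibits the morphism $X \to \Spec R$ as a retract of the morphism $X \times_R Y \to \Spec R$ in the arrow category of $R$-schemes.

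Since $X \times_R Y \cong \mathbb{A}^N_R$, the map $X \times_R Y \to \Spec R$ is an $\mathbb{A}^1$-weak equivalence; because $\mathbb{A}^1$-weak equivalences are closed under retracts (a formal property of the Morel--Voevodsky model structure), the map $X \to \Spec R$ is an $\mathbb{A}^1$-weak equivalence as well, so $X$ is $\mathbb{A}^1$-contractible. Theorem \ref{dvr} then yields $X \cong \mathbb{A}^2_R$. There is essentially no genuine obstacle in this argument: the subtle content of the corollary is entirely absorbed into Theorem \ref{dvr}, which itself rests on the characterisation \cite[Theorem 1.1]{cb} applied at the residue and fraction fields combined with Sathaye's descent theorem for $\mathbb{A}^2$ over an equicharacteristic zero DVR.
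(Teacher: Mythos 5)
Your proposal is correct and follows essentially the same route as the paper: the paper's proof likewise observes that $X$ is a retract of $\mathbb{A}^N_R$, hence $\mathbb{A}^1$-contractible, and then invokes Theorem \ref{dvr}. You merely make explicit the details the paper leaves implicit (producing the $R$-point $y_0 \in Y(R)$ from the origin of $\mathbb{A}^N_R$ and the closure of $\mathbb{A}^1$-weak equivalences under retracts), which is a faithful elaboration rather than a different argument.
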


\begin{proof}
    If $X \times_R Y \cong \mathbb{A}^N_R$, then $X$ is a retract of $\mathbb{A}^N_R$. Thus $X$ is an $\mathbb{A}^1$-contractible smooth affine scheme over $R$ of relative dimension $2$. Thus $X \cong \mathbb{A}^2_R$ by Theorem \ref{dvr}.
\end{proof}

\

\subsection{Complex Sphere in $\mathbb{A}^3_{\mathbb{C}}$}
We have seen that $\mathbb{A}^1$-connectivity of a smooth complex variety $X$ over $\mathbb{C}$ implies $X$ has logarithmic Kodaira dimension $-\infty$. For affine surfaces $X$ this implies $X$ contains a cylinder. But $\mathbb{A}^1$-connectivity of a smooth complex surface $X$ does not necessarily imply that $X(\mathbb{C})$ is simply connected at infinity. 
Consider the complex sphere, 
$$X = Spec \ \frac{\mathbb{C}[x,y,z]}{(x^2+y^2+z^2-1)}.$$
The complex sphere $X$ is an $\mathbb{A}^1$-connected surface with non-trivial Picard group. Thus in particular, $X$ is not isomorphic to $\mathbb{A}^2_{\mathbb{C}}$. \\
By change of variables, $X$ is isomorphic to $Spec \ \frac{\mathbb{C}[x, y, z]}{(xy-z(2-z))}$. $X$ is a smooth complex affine surface with non-trivial Picard group.
\begin{lemma}
$\mathcal{O}(X)$ is not a U.F.D.
\end{lemma}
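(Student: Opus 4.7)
The plan is to show that in $R := \mathcal{O}(X) = \mathbb{C}[x,y,z]/(xy - z(2-z))$ the element $x$ is irreducible but not prime; since every irreducible in a U.F.D.\ is prime, this will prove the lemma. The non-primality half is automatic from $x \cdot y = z \cdot (2-z)$, which already shows $x$ divides the right-hand side, so I only need to verify that $x \nmid z$ and $x \nmid (2-z)$ in $R$.

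The main tool is the $\mathbb{Z}$-grading on $R$ with $\deg x = 1$, $\deg y = -1$, $\deg z = 0$. The defining relation $xy - z(2-z)$ is homogeneous of degree $0$, so $R$ inherits this grading, with graded pieces $R_0 = \mathbb{C}[z]$, $R_n = x^n\,\mathbb{C}[z]$ for $n \geq 1$, and $R_n = y^{|n|}\,\mathbb{C}[z]$ for $n \leq -1$ (the relation $xy = z(2-z)$ reduces every mixed monomial to this form). In a $\mathbb{Z}$-graded domain any factor of a homogeneous element is itself homogeneous, and an element of $R_n$ with $n \neq 0$ cannot be a unit (its inverse would sit in $R_{-n}$, and their product would lie in $(z(2-z))^{|n|}\mathbb{C}[z]$, which does not contain $1$); hence $R^{\times} = \mathbb{C}^{\times}$.

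To prove $x$ irreducible, suppose $x = fg$ in $R$. By the above, $f$ and $g$ are homogeneous with $\deg f + \deg g = 1$. If $\deg f \in \{0, 1\}$, direct comparison of $\mathbb{C}[z]$-coefficients forces one of $f, g$ to be a unit in $\mathbb{C}[z]$, hence in $R^{\times}$. For $|\deg f| \geq 2$, say $\deg f = a \geq 2$, write $f = x^a p(z)$ and $g = y^{a-1} q(z)$; then $fg = x \cdot (z(2-z))^{a-1} p(z) q(z)$, and equating with $x$ would force $(z(2-z))^{a-1}$ to be a unit in $\mathbb{C}[z]$, which is absurd. The symmetric case $\deg f \leq -1$ is handled identically. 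For $x \nmid z$: a hypothetical relation $z = x w$ in $R$ forces $w \in R_{-1} = y \cdot \mathbb{C}[z]$, say $w = y\,q(z)$, and then $xy\,q(z) = z(2-z) q(z) = z$ gives $(2-z) q(z) = 1$ in $\mathbb{C}[z]$, which is impossible; the argument for $x \nmid (2-z)$ is identical with $z$ and $2-z$ swapped.

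I expect the only real bookkeeping obstacle to be the explicit identification of the graded pieces $R_n$, after which everything reduces to elementary polynomial algebra in $\mathbb{C}[z]$. An alternative route would invoke the non-triviality of $\Pic(X)$ asserted just before the lemma together with the standard fact that a regular noetherian affine domain is a U.F.D.\ if and only if its divisor class group vanishes; but the direct approach chosen here has the merit of exhibiting the concrete non-unique factorization $x \cdot y = z \cdot (2-z)$ that underlies the obstruction.
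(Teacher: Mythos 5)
Your proof is correct and rests on the same observation as the paper's, namely the double factorization $\bar{x}\,\bar{y} = \bar{z}\,\overline{(2-z)}$ in $\mathcal{O}(X)$. The paper's proof stops at exhibiting the two factorizations and asserting non-uniqueness, whereas you supply the verification that this genuinely violates unique factorization --- the $\mathbb{Z}$-grading with $\deg x = 1$, $\deg y = -1$, $\deg z = 0$, the identification $R^{\times} = \mathbb{C}^{\times}$, the irreducibility of $x$, and the checks $x \nmid z$ and $x \nmid (2-z)$ --- so your write-up is the complete version of the argument the paper only sketches.
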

\begin{proof}
The following product has two ways of factorization in $\mathcal{O}(X)$,
$$\bar{x}\bar{y} = \bar{z}\overline{2-z}$$
The factorization is not unique. Therefore, $\mathcal{O}(X)$ is not a U.F.D.
\end{proof}
\begin{remark}
So the Picard group of $X$ is non-trivial and hence $\mathbb{A}^1$-fundamental group of $X$ is non-trivial. 
\end{remark}
Consider the morphisms 
$$\phi, \psi: \mathbb{G}_m \times_{\mathbb{C}} \mathbb{A}^1_{\mathbb{C}} \to Spec \ \frac{\mathbb{C}[x, y, z]}{(xy-z(2-z))}$$
given by $\phi(s, t) = (s, \frac{t(2-t)}{s}, t)$ and $\psi(s, t) = (\frac{t(2-t)}{s}, s, t)$. Then $ \mathbb{G}_m \times_{\mathbb{C}} \mathbb{A}^1_{\mathbb{C}}$ is isomorphic to the open subsets $D(x)$ and $D(y)$ of $X$ by the morphisms $\phi$ and $\psi$ respectively, where
\[
  D(x) = \left\lbrace  P \in  Spec \ \frac{\mathbb{C}[x, y, z]}{(xy-z(2-z))} \;\middle|\;
   \bar{x} \notin P
  \right\rbrace
\]
and
\[
  D(y) = \left\lbrace  P \in  Spec \ \frac{\mathbb{C}[x, y, z]}{(xy-z(2-z))} \;\middle|\;
   \bar{y} \notin P
  \right\rbrace
\]
Thus $X$ contains cylinders. Hence $X$ has negative logarithmic Kodaira dimension. The proof in \cite[Theorem 4.3.4]{sawant} shows that the complex sphere $X$ is $\mathbb{A}^1$-chain connected, in particular $X$ is $\mathbb{A}^1$-connected. Note that the closure $\overline{X}$ of $X$ in $\mathbb{P}^3_{\mathbb{C}}$ is isomorphic to $\mathbb{P}^1_{\mathbb{C}} \times \mathbb{P}^1_{\mathbb{C}}$ and $\overline{X} \setminus X = \mathbb{P}^1_{\mathbb{C}}$ of degree $2$. Therefore, the fundamental group at infinity of $X$ is non trivial.

\section{$\mathbb{A}^1$-homotopy type of $S^{3,2}$ and $S^{5,3}$}
Let $k$ be a field of characteristic $0$.  In this section we prove that if an open subscheme of a smooth affine $k$-surface has same $\mathbb{A}^1$-homotopy type as $\mathbb{A}^2_{k} \setminus \{(0,0)\}$, then it is isomorphic to $\mathbb{A}^2_{k} \setminus \{(0,0)\}$ (Theorem \ref{poincare}). However, in dimension three we prove that the Koras Russell threefold of the first kind minus a point is $\mathbb{A}^1$-weakly equivalent to $\mathbb{A}^3_{\mathbb{C}} \setminus \{(0,0,0)\}$, but it is not isomorphic to $\mathbb{A}^3_{\mathbb{C}} \setminus \{(0,0,0)\}$ (Theorem \ref{not poincare}).

\begin{theorem} \label{poincare}
Let $k$ be a field of characteristic $0$  and $X$ be a smooth affine $k$-surface such that $U \subset X$ is a non empty open subscheme. Suppose that $U$ is isomorphic to $\mathbb{A}^2_k \setminus \{(0, 0)\}$ in $\mathbf{H}(k)$. Then $U$ and $\mathbb{A}^2_k \setminus \{(0,0)\}$ are isomorphic as $k$-varieties.
\end{theorem}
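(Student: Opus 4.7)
The plan is to translate the $\mathbb{A}^1$-weak equivalence $U\simeq\mathbb{A}^2_k\setminus\{(0,0)\}$ into enough birational data on $U\subset X$ to reduce the identification of $U$ to the affine surface classification \cite[Theorem 1.1]{cb}.

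First I would extract the classical invariants of $U$ from the hypothesis. The functors $\mathcal{O}(-)^{\times}$ and $\Pic(-)=H^1_{\mathrm{Nis}}(-,\mathbb{G}_m)$ are representable in $\mathbf{H}(k)$ by $\mathbb{A}^1$-local objects, so the $\mathbb{A}^1$-weak equivalence preserves them: $\Pic(U)=0$ and $\mathcal{O}(U)^{\times}=k^{\times}$. Since $\mathbb{A}^2_k\setminus\{(0,0)\}$ is $\mathbb{A}^1$-connected, so is $U$, and hence by Corollary \ref{connected Kodaira dimension 1} one has $\bar\kappa(U)=-\infty$.

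Next I would reduce to the case where $Z:=X\setminus U$ has codimension $2$ in $X$. If $Z$ contains a divisorial component $D$ then $X\setminus D$ is again smooth affine and contains $U$ as an open subscheme with strictly smaller boundary; iterating, we may assume $Z=\{p_1,\dots,p_r\}$, a finite collection of closed points with residue fields $k_i/k$.

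The central step is the Morel--Voevodsky cofiber sequence in $\mathbf{H}_\bullet(k)$
$$U_+ \longrightarrow X_+ \longrightarrow X/U \simeq \bigvee_{i=1}^r (\Spec k_i)_+\wedge S^{4,2},$$
where the right-hand Thom spaces are trivialised because the $p_i$ are closed points. Feeding this through the motivic cohomology long exact sequence, using $U\simeq S^{3,2}$ together with the cohomological dimension bound $\widetilde{H}^{p,q}_{\mathrm{mot}}(X)=0$ for $p>q+2$ on a smooth affine surface, I expect to force $r=1$, $k_1=k$, and $\widetilde{H}^{*,*}_{\mathrm{mot}}(X)=0$. Combined with $\pi_0^{\mathbb{A}^1}(X)$ trivial (since filling in finitely many closed points of an $\mathbb{A}^1$-connected smooth scheme preserves $\mathbb{A}^1$-connectedness), this yields that $X$ is $\mathbb{A}^1$-contractible. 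Then \cite[Theorem 1.1]{cb} gives $X\cong\mathbb{A}^2_k$, and $U=\mathbb{A}^2_k\setminus\{p_1\}$ with $p_1\in X(k)$ is isomorphic to $\mathbb{A}^2_k\setminus\{(0,0)\}$ after a $k$-linear translation.

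The main obstacle lies in the cofiber-sequence step: one must identify the connecting homomorphism $\widetilde{H}^{3,2}(U)\to\widetilde{H}^{4,2}(X/U)$ with the sum of local fundamental classes at the $p_i$ and argue that the equivalence $U\simeq S^{3,2}$ forces it to be an isomorphism, which is what gives $r=1$ and, by tracking the twisted Thom spheres $(\Spec k_i)_+\wedge S^{4,2}$, the $k$-rationality of $p_1$. This is exactly where the hypotheses that $X$ is affine and that $\mathrm{char}\,k=0$ enter essentially, the latter through \cite[Theorem 1.1]{cb}.
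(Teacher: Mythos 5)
Your outline agrees with the paper's up to the reduction $Z=\{p_1,\dots,p_r\}$ and the final use of a Gysin/cofiber computation to pin down $r$ and the residue fields, but the pivotal middle step is different and, as written, has two genuine gaps. First, you deduce that $X$ is $\mathbb{A}^1$-contractible from ``$\widetilde{H}^{*,*}_{\mathrm{mot}}(X)=0$ together with $\pi_0^{\mathbb{A}^1}(X)$ trivial.'' Vanishing of reduced motivic cohomology plus $\mathbb{A}^1$-connectedness does not yield $\mathbb{A}^1$-contractibility: one needs at least $\mathbb{A}^1$-simple connectedness and a Whitehead-type argument with $\mathbb{A}^1$-homology sheaves, which are not the same as motivic cohomology; compare how much machinery the paper must deploy in Theorem \ref{not poincare} (a suspension equivalence, the computation of $\pi_1^{\mathbb{A}^1}$ via \cite{asokdoran}, and the relative $\mathbb{A}^1$-homology Whitehead theorem of \cite{yuri}) just to promote a homological equivalence to an $\mathbb{A}^1$-weak equivalence. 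Second, the step you yourself flag as the ``main obstacle'' --- that the connecting map $\widetilde{H}^{3,2}(U)\to\bigoplus_i H^{0,0}(\kappa(p_i))$ is an isomorphism, forcing $r=1$ and $\kappa(p_1)=k$ --- cannot be extracted from the long exact sequence at the point where you run it, because you do not yet control $\widetilde{H}^{3,2}(X)$ or $\widetilde{H}^{4,2}(X)=\widetilde{CH}^2(X)$; a priori the connecting map could be injective but not surjective with $r>1$. The argument is therefore circular exactly where it matters most.

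The paper avoids both problems by reversing the order: it first proves $X\cong\mathbb{A}^2_k$ by classical surface theory --- $\operatorname{Pic}$ and the units of the ambient surface are trivial because the boundary has codimension $2$, the logarithmic Kodaira dimension is $-\infty$ by the dominant cylinder supplied by Theorem \ref{A1-connected dominant}, then Miyanishi's classification \cite[Section 4.1]{miyanishi} applies over an uncountable algebraically closed extension $K\supset k$ and Kambayashi's theorem \cite{kamb} descends the conclusion to $k$ --- and only then runs the Gysin triangle, where $M(X)\cong\mathbb{Z}$ is already known and the comparison of $M(U)\cong\mathbb{Z}\oplus\bigoplus_{i} M(\kappa(p_i))(2)[3]$ with $\mathbb{Z}\oplus\mathbb{Z}(2)[3]$ immediately gives $r=1$ and $k$-rationality. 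Note also that Corollary \ref{connected Kodaira dimension 1} is stated over an algebraically closed field, so your direct application of it to $U$ over $k$ requires the same base change and descent. Your extraction of $\operatorname{Pic}$ and units from the $\mathbb{A}^1$-equivalence and your reduction to a codimension-$2$ boundary are both sound; if you replace the contractibility detour by the Miyanishi--Kambayashi identification of the ambient surface (using exactly the invariants you already have), your proof closes up and essentially becomes the paper's.
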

\begin{proof}
Let $K/k$ be a field extension such that $K$ is uncountable and algebraically closed. 

\textbf{Step 1 :} 
The quasi-affine variety $U$ is not affine.
 Indeed, if $U$ is affine then $U_K:=U \times_k Spec \ K$ is affine and $U_K \cong 
\mathbb{A}^2_K \setminus \{(0,0)\}$ in $\mathbf{H}(K)$. This implies that $U_K$ is an affine variety with trivial Picard group and trivial group of units. Moreover, $\mathbb{A}^1$-connectedness of $\mathbb{A}^2_K \setminus \{(0,0)\}$ implies that $U_K$ has logarithmic Kodaira dimension $-  \infty$ (Corollary \ref{connected Kodaira dimension 1}). Therefore, $U_K \cong \mathbb{A}^2_K$ as $K$-varieties by \cite[Section 4.1]{miyanishi}. As $\mathbb{A}^2_K \setminus \{(0,0)\}$ is not $\mathbb{A}^1$-simply connected \cite[Theorem 6.40]{mor},  this is absurd.

\

\textbf{Step 2 :}
 By Noetherian property, we can embed $U$ in a smooth affine $k$-surface $\Tilde{X}$, which is also smallest in the sense that there is no smooth affine surface in between $U$ and $\Tilde{X}$ contained in $X$. The closed subscheme $\Tilde{X} \setminus U$ is finitely many closed points. Indeed, if there is an irreducible closed subset $D$ of codimension $1$ contained in $\Tilde{X} \setminus U$, then $D$ is an effective Cartier divisor which is a locally principal closed subscheme. Thus $\Tilde{X} \setminus D$ is affine which contradicts that $\Tilde{X}$ is the smallest. Threfore $U = \Tilde{X} \setminus \{p_1, \dots p_n \}$ where $p_i$'s are the closed points of $\Tilde{X}$.

\

\textbf{Step 3 :} The smooth $K$-scheme $U_K$ is isomorphic to $\mathbb{A}^2_K \setminus \{(0,0) \}$ in $\mathbf{H}(k)$. Therefore $U_K$ is $\mathbb{A}^1$-connected which implies that $U_K$ is a connected open subset of $\Tilde{X}_K := \Tilde{X} \times_k Spec \ K$. Thus,  $U_K = \Tilde{X}_K \setminus \{p_1^K, \dots, p_n^K \}$, where $p_i^K$ is the extension of the closed point $p_i$ to $K$, and therefore each $p_i^K$ is a finite disjoint union of finitely many $K$ points. As the connected components of the smooth scheme $\Tilde{X}_K$ has dimension $2$ and as $\Tilde{X}_K \setminus \{p_1^K, \dots, p_n^K \}$ is connected, therefore $\Tilde{X}_K$ is a connected smooth scheme. 

\

\textbf{Step 4 :}
Since $\Tilde{X}_K \setminus U_K$ is of pure codimension $2$, $\Tilde{X}_K$ has trivial Picard group by \cite[Proposition 6.5]{hart}. By Theorem \ref{A1-connected dominant}, since $U_K$ is $\mathbb{A}^1$-connected, there is a dominant morphism $H:\mathbb{A}^1_{K} \times_{K} W \to U_K$ such that $H(0, -) \neq H(1, -)$ with $W$ a smooth $K$-variety. Composing it with the inclusion $U_K \hookrightarrow \Tilde{X}_K$, we get a dominant morphism $H: \mathbb{A}^1_{K} \times_{K} W \to \Tilde{X}_K$. Therefore $\Tilde{X}_K$ has logarithmic Kodaira dimension $- \infty$. The restriction map $\mathcal{O}(\Tilde{X}_K) \to \mathcal{O}(U_K)$ is an isomorphism since $\Tilde{X}_K \setminus U_K$ is of pure codimension $2$. So $\Tilde{X}_K$ has trivial group of units. Therefore, $\Tilde{X}_K$ is isomorphic to $\mathbb{A}^2_K$ as $K$-varieties \cite[Section 4.1]{miyanishi}. As there is no non-trivial $\mathbb{A}^2$-form over the field characteristic $0$ \cite[Theorem 3]{kamb}, we get $\Tilde{X} \cong \mathbb{A}^2_k$ and hence $U \cong \mathbb{A}^2_k \setminus \{ p_1, \dots p_n\}$, as $k$-varieties.

\

\textbf{Step 5 :}
Next consider the Gysin trinagle (\cite[Theorem 15.15]{MWV}) in $\mathbf{DM}_{gm}(k, \mathbb{Z})$ 
$$M(U) \to M(\Tilde{X}) \to \oplus_{i =1}^n M(\kappa(p_i))(2)[4] \to M(U)[1].$$
Note that each of $M(\kappa(p_i))$ are strongly dualizable with $M(\kappa(p_i))^* = M(\kappa(p_i))$ (for a object $M \in \mathbf{DM}_{gm}(k, \mathbb{Z})$ $M^*$ is the dual  \cite{MWV}[Definition 20.6, Example 20.11, Definition 20.15]). Therefore, $M(\Tilde{X}) \cong \mathbb{Z} = M(Spec \ k)$ implies that the map $M(\Tilde{X}) \to  \oplus_{i =1}^n M(\kappa(p_i))(2)[4]$ is the zero map and $M(U) \cong M(k)  \oplus_{i =1}^n M(\kappa(p_i))(2)[3]$. But $M(U) \cong M(\mathbb{A}^2 \setminus \{(0,0) \} \cong M(k) \oplus M(k)(2)[3]$. This shows that $n =1$ and $p_1$ is a $k$-rational point (one can use $H^{4, 3}_{\mathcal{M}}(U, \mathbb{Z})$ and the fact $H^{4,3}_{\mathcal{M}}(k, \mathbb{Z}) = 0$ and $H^{1,1}_{\mathcal{M}}(X, \mathbb{Z}) =  \mathcal{O}(X)^*$). This completes the proof.
\end{proof}

\

Theorem \ref{poincare} is not true in case of quasi-affine complex threefold.
Suppose that $X$ is the Koras-Russell threefold of the first kind \cite{df} which is given by the equation 
$$x^mz = y^r + t^s + x \text{ in } \mathbb{A}^4_k,$$ 
where $k$ is an algebraically closed field of characteristic $0$, $m \geq 2$ and $r, s \geq 2$ are coprime integers. Consider the morphism,
$$\phi: X \to \mathbb{A}^3_{k} \text{ given by } (x, y, z, t) \mapsto (x, y, t).$$
Suppose, $p =(1,0,1,0)$ is a point in $X$. Then $\phi(p) = q = (1, 0, 0)$ and $\phi^{-1}(q) = p$. This gives the restriction morphism 
$$\bar{\phi}:X \setminus \{p\} \to \mathbb{A}^3_{k} \setminus \{q\}.$$ 
We show here that $X \setminus \{p\} \cong \mathbb{A}^3_{k} \setminus \{q\}$ in $\mathbf{H}(k)$, but $X \setminus \{p\}$ is not isomorphic to $\mathbb{A}^3_{k} \setminus \{q\}$ as $k$-varieties.  
\begin{lemma} \label{isomorphism normal bundle}
The induced map $d\phi_p: T_pX \to T_q\mathbb{A}^3_{k}$ is an isomorphism between the tangent spaces.
\end{lemma}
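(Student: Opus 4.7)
The plan is to compute $T_p X$ explicitly as a subspace of $T_p\mathbb{A}^4_k \cong k^4$, then write down $d\phi_p$ as a linear map and check that it is an isomorphism. This is purely a Jacobian calculation, so there is no real obstacle.

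First I would write $X \subset \mathbb{A}^4_k$ as the zero locus of $F(x,y,z,t) = x^m z - y^r - t^s - x$, so that $T_p X = \ker(dF_p)$ inside $T_p\mathbb{A}^4_k$. The partial derivatives of $F$ are
\[
\partial_x F = m x^{m-1} z - 1, \quad \partial_y F = -r y^{r-1}, \quad \partial_z F = x^m, \quad \partial_t F = -s t^{s-1}.
\]
Evaluating at $p = (1,0,1,0)$ and using $r, s \geq 2$, these become $m-1$, $0$, $1$, $0$ respectively. Hence
\[
T_p X = \{(a,b,c,d) \in k^4 : (m-1)\,a + c = 0\},
\]
which is a $3$-dimensional subspace. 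In particular, $p$ is a smooth point of $X$ (consistent with $X$ being a smooth affine threefold).

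Next I would identify $d\phi_p : T_p X \to T_q\mathbb{A}^3_k \cong k^3$ as the restriction of the projection $(a,b,c,d) \mapsto (a,b,d)$. To check injectivity, suppose $(a,b,c,d) \in T_p X$ maps to $0$; then $a = b = d = 0$, and the defining relation $(m-1)a + c = 0$ forces $c = 0$. Since both source and target have dimension $3$, injectivity gives the isomorphism. This completes the proof.
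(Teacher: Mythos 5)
Your proposal is correct and follows essentially the same route as the paper: both compute $\nabla F(1,0,1,0)=(m-1,0,1,0)$, identify $T_pX$ as the hyperplane $(m-1)a+c=0$ in $k^4$, and observe that the projection $(a,b,c,d)\mapsto(a,b,d)$ restricts to an isomorphism onto $T_q\mathbb{A}^3_k$. The only cosmetic difference is that the paper parametrizes $T_pX$ as $\{(a,b,-(m-1)a,d)\}$ while you describe it as a kernel and argue injectivity plus dimension count.
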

\begin{proof}
Suppose, $f(x, y, z, t) = x^mz - y^r - t^s - x$. Then $\nabla f (x, y, z, t)= (mx^{m-1}z - 1, -ry^{r-1}, x^m, -st^{s-1})$, so $\nabla f(1, 0, 1 ,0) = (m-1,0,1, 0)$. Thus the tangent space $T_pX$ of $X$ at $p$ is given by 
$$T_pX = \{(a,b,-(m-1)a,d)|a, b, d \in k\}$$
The map $d\phi_p: T_pX \to T_q\mathbb{A}^3_{k}$ is given by $(a,b,-(m-1)a,d) \mapsto (a, b, d)$. Therefore $d\phi_p$ is an isomorphism between the tangent spaces.
\end{proof}
The proof of the following lemma is same as in \cite[Example 2.21]{dpo}. We include it here for the sake of completeness.
\begin{lemma} \cite[Example 2.21]{dpo} \label{punctured russell threefold}
The quasi-affine threefold $X^{\prime}=X \setminus \{p\}$ is $\mathbb{A}^1$-chain connected.
\end{lemma}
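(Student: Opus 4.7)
The plan is to show that any $k$-rational point of $X^{\prime}$ can be connected to the origin $o = (0,0,0,0) \in X^{\prime}$ by a chain of affine lines contained in $X^{\prime}$. The structural input is the projection $\pi \colon X \to \mathbb{A}^1_k$, $(x,y,z,t) \mapsto x$. For $x_0 \neq 0$, the fibre $\pi^{-1}(x_0)$ is isomorphic to $\mathbb{A}^2_k$ via $(y,t)$, with $z$ determined by the defining equation; the central fibre $\pi^{-1}(0) = \{(0,y,z,t) : y^r + t^s = 0\}$ receives a dominant morphism from $\mathbb{A}^2_k$ via $(\mu, z) \mapsto (0, -\mu^s, z, \mu^r)$, coming from the cuspidal parametrization of $\{y^r + t^s = 0\}$. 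Therefore each fibre of $\pi$ is $\mathbb{A}^1$-chain connected, and moreover $\pi^{-1}(1) \setminus \{p\} \cong \mathbb{A}^2_k \setminus \{\mathrm{pt}\}$ is $\mathbb{A}^1$-chain connected.

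The key step is to bridge different fibres of $\pi$ by explicit affine lines. I would consider morphisms $\ell \colon \mathbb{A}^1_k \to X$ of the form
\[
\ell(\lambda) = \bigl(\lambda,\, y(\lambda),\, z(\lambda),\, t(\lambda)\bigr),
\]
with $y, t \in k[\lambda]$ of degree at most $m-1$ chosen so that $y(\lambda)^r + t(\lambda)^s + \lambda$ is divisible by $\lambda^m$; then $z(\lambda) := (y(\lambda)^r + t(\lambda)^s + \lambda)/\lambda^m$ is itself a polynomial, and $\ell$ factors through $X$. Writing $y = c + y_1 \lambda + \cdots + y_{m-1} \lambda^{m-1}$ and similarly for $t$, the divisibility condition amounts to the single polynomial relation $c^r + d^s = 0$ (from the constant term) together with $m-1$ further equations, each linear in the higher coefficients of $y$ and $t$ and non-degenerate on the smooth locus of $\{y^r + t^s = 0\}$. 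The resulting family of bridges is positive-dimensional, and each bridge $\ell$ passes through the central fibre at $\lambda = 0$ and through $\pi^{-1}(\lambda_0)$ at any $\lambda = \lambda_0$.

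Putting the pieces together, for any $Q \in X^{\prime}$ I would first move $Q$ within its fibre to a sufficiently general point, then cross to the central fibre via a bridge $\ell$ as above, and finally reach $o$ using the $\mathbb{A}^2_k$-parametrization of the central fibre. The main obstacle is ensuring that every link of the chain avoids $p$. Since $p$ lies only in $\pi^{-1}(1)$, the within-fibre steps in other fibres cause no issue, and inside $\pi^{-1}(1)$ we use that $\mathbb{A}^2_k \setminus \{\mathrm{pt}\}$ is $\mathbb{A}^1$-chain connected. For the bridges, observe that $\ell$ meets $p$ only if $\lambda = 1$, $y(1) = 0$ and $t(1) = 0$ (the remaining condition $z(1) = 1$ then follows automatically from $z(1) = y(1)^r + t(1)^s + 1$); this cuts out a proper closed subscheme of the family of bridges, so a generic bridge lies in $X^{\prime}$ and the required chain exists.
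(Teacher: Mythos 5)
Your proposal is correct and follows essentially the same route as the paper: fibre $X^{\prime}$ over $\mathbb{A}^1_k$ by the $x$-coordinate, observe that each fibre ($\mathbb{A}^2_k$ over $\alpha\neq 0,1$, $\mathbb{A}^2_k\setminus\{\mathrm{pt}\}$ over $1$, and $\Gamma_{r,s}\times\mathbb{A}^1_k$ over $0$) is $\mathbb{A}^1$-chain connected, and bridge distinct fibres by affine lines $\lambda\mapsto(\lambda,y(\lambda),z(\lambda),t(\lambda))$ with $\lambda^m \mid y(\lambda)^r+t(\lambda)^s+\lambda$ --- the paper exhibits one explicit such bridge and checks it misses $p$, while you argue that the bridges form a positive-dimensional family in which the locus hitting $p$ (correctly identified as $y(1)=t(1)=0$) is proper closed. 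Two small points to tidy: chain connectedness must be verified on $F$-points for every finitely generated field extension $F/k$, not only on $k$-points (your construction works verbatim over such $F$, as in the paper), and the parametrization $(\mu,z)\mapsto(0,-\mu^s,z,\mu^r)$ of the central fibre satisfies $y^r+t^s=0$ only when $r$ is odd, so the signs must be adjusted using that $r$ and $s$ are coprime.
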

\begin{proof}
Suppose, $F/k$ is a finitely generated field extension. Consider the projection map
$p_x: X^{\prime} \to \mathbb{A}^1_{k}$ given by $(x, y, z, t) \mapsto x$. The fiber over a point $\alpha \in \mathbb{G}_m$ is $\mathbb{A}^2_k$, if $\alpha \neq 1$ and $\mathbb{A}^2_k \setminus \{(0,0)\}$, if $\alpha = 1$. 
Thus the fiber over every point of $\mathbb{G}_m$ is $\mathbb{A}^1$-chain connected, in particular for every $\alpha \in \mathbb{G}_m$, any two $F$-points in $p_x^{-1}(\alpha)$ can be joined by a chain of $\mathbb{A}^1_F$'s. The fiber over $0$ is $\mathbb{A}^1_{k} \times_{k} \Gamma_{r,s}$ (where, $\Gamma_{r,s}$ is the curve in $\mathbb{A}^2_k$ defined as $y^r + t^s = 0$). Here also any two $F$-points can be joined by $\mathbb{A}^1_F$'s. Indeed, for an $F$-point $(t_1, t_2, t_3)$ in $\mathbb{A}^1_{F} \times_{F} \Gamma_{r,s}$, the naive $\mathbb{A}^1$-homotopy given by
$$\gamma: \mathbb{A}^1_F \to \mathbb{A}^1_{k} \times_{k} \Gamma_{r,s} \text{ as } v \mapsto (t_1v, t_2v^s, t_3v^r)$$
joins $(0,0,0)$ with $(t_1, t_2, t_3)$. To get the naive-$\mathbb{A}^1$-homotopy between the points in different fibers, we find the polynomials $y(v), t(v) \in k[v]$ such that $v^m$ divides $y(v)^r+t(v)^s+v$. Indeed if $r$ is even and $s$ is odd (similarly for $r$ is odd and $s$ is even), suppose that $y(v)$ and $t(v)$ are given by
$$y(v) = 1+a_0v+a_1v^2+\dots+a_{m-2}v^{m-1} \text{ and } t(v) = -1-v\dots -v^{m-1},$$
for some $a_i \in k$. We choose $a_i$ according to the co-effecients of $v^i$ is zero in $y(v)^r+t(v)^s+v$ for every $i \leq m-1$. 
The naive $\mathbb{A}^1$-homotopy $\theta: \mathbb{A}^1_F \to X^{\prime}$ given by
$$v \mapsto (\alpha v, \frac{y(\alpha v)^r+t(\alpha v)^s+\alpha v}{(\alpha v)^m}, y(\alpha v), t(\alpha v))$$
connects a point in $p_x^{-1}(0)$ with $p_x^{-1}(\alpha)$, $\alpha \in \mathbb{G}_m(k)$. 
Note that, the point $p  = (1,0,1,0)$ does not lie in the image of $\theta$. Indeed, if for some $v$, $\theta(v) = p$, then $\alpha v=1, y(\alpha v)=1, t(\alpha v) = 0$. But then $\frac{y(\alpha v)^r+t(\alpha v)^s+\alpha v}{(\alpha v)^m} = 2$.
Therefore, $X^{\prime}$ is $\mathbb{A}^1$-chain connected.
\end{proof}
\begin{theorem} \label{not poincare}
$X \setminus \{p\}$ is $\mathbb{A}^1$-weakly equivalent to $\mathbb{A}^3_{k} \setminus \{q\}$, however they are not isomophic as  $k$-varieties.
\end{theorem}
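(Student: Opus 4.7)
The plan is to decompose the statement into two independent parts: (1) $\bar{\phi}$ is an $\mathbb{A}^1$-weak equivalence in $\mathbf{H}_{\bullet}(k)$, and (2) there is no scheme-theoretic isomorphism $X \setminus \{p\} \cong \mathbb{A}^3_k \setminus \{q\}$. Part (1) will proceed via Morel--Voevodsky purity combined with the $\mathbb{A}^1$-contractibility of $X$, while part (2) follows from a Hartogs-style identification of global sections together with the classical Makar--Limanov obstruction.

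For (1), I first invoke the theorem (due to Hoyois--Krishna--{\O}stv{\ae}r, following Dubouloz--Fasel) that the Koras--Russell threefold of the first kind is $\mathbb{A}^1$-contractible over $k$. Since $\mathbb{A}^3_k$ is also $\mathbb{A}^1$-contractible, $\phi \colon X \to \mathbb{A}^3_k$ is automatically an $\mathbb{A}^1$-weak equivalence in $\mathbf{H}(k)$. Apply the Morel--Voevodsky homotopy purity theorem to the smooth closed pairs $(X, \{p\})$ and $(\mathbb{A}^3_k, \{q\})$ to obtain a commutative ladder of cofiber sequences
$$\begin{tikzcd}
X \setminus \{p\} \ar[r] \ar[d, "\bar{\phi}"] & X \ar[r] \ar[d, "\phi"] & \mathrm{Th}(N_{p/X}) \ar[d] \\
\mathbb{A}^3_k \setminus \{q\} \ar[r] & \mathbb{A}^3_k \ar[r] & \mathrm{Th}(N_{q/\mathbb{A}^3_k})
\end{tikzcd}$$
in $\mathbf{H}_{\bullet}(k)$. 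Since $p$ and $q$ are $k$-rational, the normal bundles are the tangent spaces $T_pX$ and $T_q\mathbb{A}^3_k$ viewed as trivial vector bundles over $\Spec k$, and the right-hand vertical is induced by the linear map $d\phi_p$, which is an isomorphism by Lemma \ref{isomorphism normal bundle}. Thus the middle and right verticals are $\mathbb{A}^1$-weak equivalences; the standard Puppe/five-lemma argument in the pointed motivic homotopy category then forces $\bar{\phi}$ to be an $\mathbb{A}^1$-weak equivalence as well.

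For (2), suppose for contradiction that $X \setminus \{p\} \cong \mathbb{A}^3_k \setminus \{q\}$ as $k$-varieties. Passing to global sections and using that $X$ is a smooth (hence normal) affine threefold with $p$ of codimension three, Hartogs's theorem gives $\mathcal{O}(X \setminus \{p\}) = \mathcal{O}(X)$; similarly $\mathcal{O}(\mathbb{A}^3_k \setminus \{q\}) = k[x,y,z]$. The supposed isomorphism would therefore force $\mathcal{O}(X) \cong k[x,y,z]$, and since $X$ is affine this upgrades to $X \cong \mathbb{A}^3_k$ as $k$-varieties. This contradicts the Makar--Limanov theorem that no Koras--Russell threefold of the first kind is algebraically isomorphic to affine $3$-space.

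The main obstacle is part (1): the $\mathbb{A}^1$-contractibility of $X$ is a deep external input that must be cited rather than reproved here, and one has to verify that the purity ladder really commutes with the right vertical realized as $\mathrm{Th}(d\phi_p)$ (which reduces to the functoriality of MV purity for morphisms of smooth closed pairs) and that the Puppe/five-lemma argument produces an unstable, not merely stable, $\mathbb{A}^1$-weak equivalence on the left column.
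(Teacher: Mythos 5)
Your part (2) is fine and essentially the paper's argument (the paper extends the two inverse morphisms across the codimension-three points and concludes they are inverse isomorphisms $X\cong\mathbb{A}^3_k$; your Hartogs-on-global-sections version is an equivalent packaging), and the contradiction with the nontrivial Makar--Limanov invariant is exactly what the paper uses. The gap is in part (1), precisely at the step you yourself flag as an "obstacle to verify": there is no five lemma for cofiber sequences in the \emph{unstable} pointed ($\mathbb{A}^1$-)homotopy category. A ladder of cofiber sequences in which the middle and right vertical maps are weak equivalences does \emph{not} force the left vertical map to be one; already simplicially, if $A$ is a nontrivial acyclic space then $A\to *$ has contractible cofiber $\Sigma A$, so it fits into such a ladder over $*\to *\to *$ without being an equivalence. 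What the purity ladder actually yields is that $\Sigma_s\bar{\phi}$ is an $\mathbb{A}^1$-weak equivalence, i.e.\ that $\bar{\phi}$ is an $\mathbb{A}^1$-homology equivalence, and nothing more.

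The paper closes exactly this gap with three additional ingredients that are absent from your proposal and are not routine verifications: (i) $X\setminus\{p\}$ is $\mathbb{A}^1$-chain connected, hence $\mathbb{A}^1$-connected --- this is Lemma \ref{punctured russell threefold}, proved by an explicit construction of naive $\mathbb{A}^1$-homotopies connecting points in and across the fibers of the projection $(x,y,z,t)\mapsto x$, including the degenerate fiber over $0$ and a check that the constructed homotopies avoid $p$; (ii) $\pi_1^{\mathbb{A}^1}(X\setminus\{p\})$ is trivial, quoted from \cite{asokdoran}; and (iii) the relative $\mathbb{A}^1$-homology Whitehead theorem \cite{yuri}, which upgrades an $\mathbb{A}^1$-homology equivalence between $\mathbb{A}^1$-simply connected spaces to an unstable $\mathbb{A}^1$-weak equivalence. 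Without (i)--(iii) your argument only proves a stable (or one-fold suspended) statement. Everything else in part (1) --- contractibility of $X$ from \cite{df}, the purity identification of the cofibers as Thom spaces of the trivial rank-three normal bundles at the rational points $p$ and $q$, and the identification of the right vertical map with the one induced by $d\phi_p$ via Lemma \ref{isomorphism normal bundle} --- matches the paper.
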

\begin{proof}
First we prove that $X \setminus \{p\}$ is $\mathbb{A}^1$-weakly equivalent to $\mathbb{A}^3_{k} \setminus \{q\}$.
Consider the commutative diagram with rows are cofibre sequences:
$$\begin{tikzcd}[column sep=50pt, row sep=50 pt]
X \setminus \{p\} \ar[r] \ar[d, "\bar{\phi}"] &X \ar[r] \ar[d, "\phi" near start] &X/(X \setminus \{p\}) \ar[d]\\
\mathbb{A}^3_{k} \setminus \{q\} \ar[r]  &\mathbb{A}^3_{k} \ar[r] &\mathbb{A}^3_{k}/(\mathbb{A}^3_{k} \setminus \{q\})
\end{tikzcd}$$
The middle vertical map is an $\mathbb{A}^1$-weak equivalence, since $X$ is $\mathbb{A}^1$-contractible \cite[Theorem 1.1]{df}. By homotopy purity, $X/(X \setminus \{p\})$ is isomorphic to the Thom space of the normal bundle over $p$ \cite[\S 3.2, Theorem 2.23]{mv}. Since $p$ is a $k$-point of the smooth threefold $X$, the normal bundle over $p$ in $X$ is the trivial bundle of rank three over $p$. The right vertical map 
$$(\mathbb{P}^1_{k})^{\wedge 3} \cong X/(X \setminus \{p\}) \to \mathbb{A}^3_{k}/(\mathbb{A}^3_{k} \setminus \{q\}) \cong (\mathbb{P}^1_{k})^{\wedge 3},$$
is induced by $d\phi_p$.
Thus the right vertical map is also an $\mathbb{A}^1$-weak equivalence by Lemma \ref{isomorphism normal bundle} \cite[Lemma 2.1]{voev2}. Therefore, taking simplicial suspension, the map $\sum_s \bar{\phi}: \sum_s(X \setminus \{p\}) \to \sum_s(\mathbb{A}^3_{k} \setminus \{q\})$ is an $\mathbb{A}^1$-weak equivalence. Now $X \setminus \{p\}$ is $\mathbb{A}^1$-connected by Lemma \ref{punctured russell threefold} and $\pi_1^{\mathbb{A}^1}(X \setminus \{p\})$ is also trivial \cite[Theoem 4.1]{asokdoran}. Thus $\bar{\phi}$ is a $\mathbb{A}^1$-homology equivalence. Therefore $\bar{\phi}$ is an $\mathbb{A}^1$-weak equivalence \cite[Theorem 1.1]{yuri}. \par
 Now we show that $X \setminus \{p\}$ is not isomorphic to $\mathbb{A}^3_{k} \setminus \{q\}$ as $k$-varieties.
Suppose, if possible there is an isomorphism $\phi:\mathbb{A}^3_{k} \setminus \{q\} \to X \setminus \{p\}$ with its inverse $\psi$. Since $q$ and $p$ are the codimension $3$ points of $\mathbb{A}^3_{k}$ and $X$ respectively, both $\phi$ and $\psi$ can be extended to a morphism $\bar{\phi}: \mathbb{A}^3_{k} \to X$ and $\bar{\psi}: X \to \mathbb{A}^3_{k}$. Both the maps $\bar{\psi} \circ \bar{\phi}$ and $\bar{\phi} \circ \bar{\psi}$ agree with the identity maps in a complement of a $k$-point. Therefore both $\bar{\phi}$ and $\bar{\psi}$ are  isomorphisms. It is a contradiction since $X$ has non trivial Makar-Limanov invariant \cite{kaliman}. Therefore, $X \setminus \{p\}$ is not isomorphic to $\mathbb{A}^3_k \setminus \{q\}$ as $k$-varieties.
\end{proof}

\end{document}